

\documentclass[12pt]{article} 

\usepackage[utf8]{inputenc} 


\usepackage{geometry} 
\geometry{a4paper} 

\usepackage{graphicx} 


\usepackage{booktabs} 
\usepackage{array} 
\usepackage{paralist} 
\usepackage{verbatim} 
\usepackage{subfig} 
\usepackage{amsmath,amsthm,amssymb,amsfonts}
\usepackage{bbm}
\usepackage{xcolor}
\usepackage{tikz}
\usetikzlibrary{positioning}

\usepackage{fancyhdr} 
\pagestyle{fancy} 
\lhead{}\chead{}\rhead{}
\lfoot{}\cfoot{\thepage}\rfoot{}

\usepackage{sectsty}
\allsectionsfont{\sffamily\mdseries\upshape} 

\usepackage[nottoc,notlof,notlot]{tocbibind} 
\usepackage[titles,subfigure]{tocloft} 


\newtheorem{theorem}{Theorem}[section]
\newtheorem{proposition}[theorem]{Proposition}
\newtheorem{lemma}[theorem]{Lemma}

\newtheorem{remark}[theorem]{Remark}



\title{Random walks on $\mathbb{Z}$ with metastable Gaussian distribution caused by linear drift with application to the contact process on the complete graph}
\author{O.S. Awolude, E. Cator, H. Don}

\begin{document}
\maketitle

\begin{abstract}
	We study random walks on $\mathbb{Z}$ which have a linear (or almost linear) drift towards 0 in a range around 0. This drift leads to a metastable Gaussian distribution centered at zero. We give specific, fast growing, time windows where we can explicitely bound the distance of the distribution of the walk to an appropriate Gaussian. In this way we give a solid theoretical foundation to the notion of metastability. We show that the supercritical contact process on the complete graph has a drift towards its equilibrium point which is locally linear and that our results for random walks apply. This leads to the conclusion that the infected fraction of the population in metastability (when properly scaled) converges in distribution to a Gaussian, uniformly for all times in a fast growing interval.  
\end{abstract}

\section{Introduction}

When an infectious disease is spreading in a population, an important question is how much health care capacity is needed. This directly relates to the question how many people will be infected simultaneously and how this will fluctuate over time. In this paper, we study this question for the contact process (or SIS process) on the complete graph. 

The contact process is a Markovian epidemic model, introduced by Harris \cite{H74} in 1974. Harris considered the process on the lattice $\mathbb{Z}^d$, but it can be defined for any graph. In this model, each individual (represented by a node in the graph) is either infected or healthy. Each infected individual heals at rate 1 and transmits the infection at rate $\tau$ to each of its healthy neighbors. After healing, an individual is again susceptible to infection. For surveys of results on the contact process and related models we refer to \cite{Lig99,DHB13}.

The model has been studied on several other infinite graphs like regular trees \cite{P92,L96,S96} and Galton-Watson trees \cite{HD20,BNNS21}. On infinite graphs, one of the central questions is if the disease will survive forever in the population. Typically, the model exhibits a phase transition in the sense that there exists a (graph-dependent) critical infection rate $\tau_c$ such that the infection goes extinct almost surely for $\tau<\tau_c$ and has strictly positive probability to survive forever for $\tau>\tau_c$. 

On finite graphs, the process almost surely becomes extinct, since the state space is finite and the only absorbing state of the Markov chain is the all-healthy state. Still, a phase transition could be defined by taking graph sequences and considering the extinction time as a function of the number of nodes $n$. For instance, for finite $d$-regular trees, there is a cricital $\tau_c(d)$ such the expected extinction time essentially grows logarithmically with $n$ for $\tau<\tau_c(d)$ and exponentially for $\tau>\tau_c(d)$, see \cite{Stacey01,CMMV14}. Similar results were obtained in \cite{DurLiu88,DurSch88} for intervals $\left\{1,\ldots,n\right\}\in\mathbb{Z}$, with critical value $\tau_c(\mathbb{Z})$ and the extinction time at criticality growing like a polynomial in $n$, see \cite{DurSchTan89}. Extension of these results to higher dimensions can be found in \cite{Mou93}.

If the degrees in the graph grow with $n$, the phases of long and short survival might be separated by a critical value which depends on $n$. For example, $\tau_c = 1/n$ in the complete graph $K_n$ and $\tau_c = n^{-1/2+o(1)}$ in the star graph on $n$ nodes \cite{CM13}. Phase transitions also occur in random graphs, see \cite{BNNS21, CD21} for the Erd\H{o}s-R\'eyni graph and configuration model graphs and \cite{MV16} for random regular graphs.

Many results in the field focus on proving existence of a phase transition, identifying the critical infection rate or finding the expected extinction time or extinction time distribution. In this paper we zoom in on the supercritical \emph{metastable} behaviour. Our intuition is that typically there is at least one nontrivial (i.e. not all healthy) state where healings and infections are balanced. The process has a drift (at least locally) towards this metastable (or \emph{quasistationary}) state. There are some results in the literature where the metastable fraction of infected nodes is identified by solving the balance equations. For instance, in the complete graph $K_n$, with infection rate $\tau =\lambda/n$, the critical point is at $\lambda=1$. If $\lambda>1$ is constant, the metastable solution is at $(1-1/\lambda)\cdot n$ infected nodes and the expected time to extinction is $\exp((\log(\lambda)+\tfrac 1\lambda-1)n+o(n))$ for $n\to\infty$, see \cite{CD21}. A rigorous proof of metastable convergence of the infected fraction to a constant is given in \cite{MouValYao13,CanSch15} for power law random graphs.

In the current paper, considering the contact process on $K_n$, we are not only interested in the metastable fraction of infected nodes, but also in fluctuations. In fact we study a more general problem of sequences of random walks on $\mathbb{Z}$ which have a drift towards 0 which linearly depends on the distance to 0. We show that such walks have a Gaussian metastable distribution, also creating a rigorous framework for defining the notion of Gaussian metastable distributions, using growing time intervals. As an application, we prove (taking limits of time and $n$ in the right way) a limit theorem for the metastable state in the supercritical contact process on the complete graph. Loosely speaking, it states that in metastability the number of infected individuals is distributed as a Gaussian distribution with expectation $(1-1/\lambda)\cdot n$ and variance $n/\lambda$.

\section{Overview of the paper}

The simple symmetric continuous-time random walk $\{W_t,t\geq 0\}$ on $\mathbb{Z}$ is null-recurrent and therefore does not have a proper stationary distribution. Fixing a number $d>0$, the probability $\mathbb{P}(|W_t|\leq d)$ goes to 0 as $t$ goes to infinity. To get positive probabilities, we could scale down the position at time $t$ by a factor $\sqrt{t}$ and look at $W_t/\sqrt{t}$. By the central limit theorem, this scaled position converges to a Gaussian distribution. In this paper, we will see random walks converging to a Gaussian \emph{without} scaling of time.

We look at random walks $W_t$ with a drift towards the origin. If this drift is properly chosen, $\mathbb{P}(|W_t|\leq d)$ will have a limit in $(0,1)$ for all $d>0$, the walk will be positive recurrent and it will have a proper stationary distribution. Our main result (Theorem \ref{thm:main}) essentially says the following. Let $\{W_t,t\geq 0\}$ be a random walk on $\mathbb{Z}$. Suppose $W_t$ has a drift towards the origin which is, in a range around 0, (almost) linear in the distance to 0. Then the \emph{unscaled} position of the walker will converge to a Gaussian, for all times in a growing interval, before the walker will exit the range around $0$ where we control the rates. In this overview, we will explain the spirit of the results a bit further. More precise and more general statements follow in subsequent sections.

Our object of study is a sequence of simple continuous-time random walks on $\mathbb{Z}$.  
For $n\geq 1$, define the random walk $\{W_t^{(n)},t\geq 0\}$ by its transition rates $\lambda_k^{(n)}$ and $\mu_k^{(n)}$, from $k$ to $k+1$ and to $k-1$ respectively. These rates will be chosen so that the walk has ``a linear drift towards 0''. This vague notion will be clarified in Section \ref{sec:bd_processes}, where we discuss some subtleties connected to it. In this introduction we use the term \emph{linear drift} in an informal way. In Section \ref{sec:bd_processes} we also present a few results on birth-death processes which will be intensively used later in the analysis of our random walks. These results have a classical flavour, but we need them to get specific control on the behaviour of our random walks.   

As a first step, we analyze the walks $W_t^{(n)}$ restricted to a finite range.
We let $a_n$ and $\sigma_n$ be suitable increasing functions of $n$ and restrict the state space of $W_t^{(n)}$ to $[-a_n,a_n]\cap\mathbb{Z}$. Inside this state space, we take strictly positive transition rates satisfying (possibly only approximately) 
\begin{equation}\label{eq:rates0}
\frac{\lambda_k^{(n)}}{\mu_{k+1}^{(n)}} = 1-\frac{k}{\sigma_n^2}.
\end{equation}
Note that this relation could be interpreted as a form of linear drift.

Since the walk $W_t^{(n)}$ has a finite state space, it will have a stationary distribution $W_\infty^{(n)}$. Condition \eqref{eq:rates0} guarantees that this distribution is close to a Gaussian with expectation $0$ and variance $\sigma_n^2$. In fact, the ratios $\lambda_k^{(n)}/\mu_{k+1}^{(n)}$ completely determine the stationary distribution. Nevertheless, \eqref{eq:rates0} still leaves a lot of freedom to choose the rates. For instance, for any choice of (positive) rates to the right, the rates to the left can still be chosen to meet the condition. We prove in Section \ref{sec:statdistr} our first main result (Theorem \ref{thm:statdistr}) stating that condition \eqref{eq:rates0} implies
\begin{align}
\frac{W_\infty^{(n)}}{\sigma_n}\stackrel{d}{\longrightarrow} N(0,1).
\end{align}

In Section \ref{sec:mixing}, we fix $n$ and study the convergence of $W_t^{(n)}$ to $W_\infty^{(n)}$ as time $t$ increases. To prove that this convergence is quick, condition $\eqref{eq:rates0}$ is not strong enough. Instead, here it is more natural to consider the difference $\lambda_k^{(n)}-\mu_k^{(n)}$, rather than a ratio of rates. In this context, a linear drift towards 0 means that there exists some constant $d_n>0$ such that 
\begin{align}\label{eq:lindrift}
\lambda_k^{(n)}-\mu_k^{(n)} = -d_nk.
\end{align}
Under such a condition, in Proposition \ref{prop:mixing}, we show that for all $k$
\begin{align}
|\mathbb{P}(W_t^{(n)}\leq k)-\mathbb{P}(W^{(n)}_\infty\leq k)|
\end{align}
decreases exponentially in $t$ with rate $d_n$.

Next, in Section \ref{sec:Zwalks}, we study random walks on $\mathbb{Z}$ which satisfy \eqref{eq:rates0} and \eqref{eq:lindrift} inside $[-a_n,a_n]$. Such a walk does not necessarily have a stationary distribution. If the walk on $\mathbb{Z}$ starts in 0, it will be indistinguishable from the walk restricted to $[-a_n,a_n]$ as long as the boundary of this interval is not yet reached. Note that \eqref{eq:lindrift} implies that 
\begin{align}\label{eq:lindrift1}
\frac{\lambda_k^{(n)}}{\mu_k^{(n)}} \leq 1-\frac{d_nk}{\mu_k^{(n)}} < 1
\end{align}
for $k$ positive. In Proposition \ref{prop:hitboundary}, we prove that this causes the hitting time of $a_n$ to be large with high probability. The hitting time of $-a_n$ can be bounded similarly.

Combining the previous results, we come to our main result for walks on $\mathbb{Z}$, which informally can be stated as follows. Let $W_t^{(n)}$ on $\mathbb{Z}$ have linear drift towards 0 inside $[-a_n,a_n]$ as in \eqref{eq:rates0} and \eqref{eq:lindrift}. Suppose the walk starts in (or close to) 0. Then, for each $n$, there is a large interval $I_n$ in time such that $W_t^{(n)}/\sigma_n$ is in distribution close to a standard Gaussian for all $t\in I_n$. In the limit $n\to\infty$, the difference vanishes. For the precise formulation, we refer to Theorem \ref{thm:main}. We illustrate this theorem by an example and simulation in Section \ref{sec:Zwalks}.

Finally, in Section \ref{sec:contactprocess}, we apply our results to the contact process on the complete graph $K_n$. Each node in the graph represents an individual which can either be healthy or infected. If a node is infected, it is infecting its healthy neighbors at rate $\tau$ and it is healing at rate 1. Since the graph is complete, only the size of the infected set is important for the evolution of the process. Given this size, it is irrelevant which individuals are infected. The total number of infected $X_t$ behaves like a simple random walk on $\{0,1,\ldots,n\}$. The stationary distribution is trivial: all nodes are healthy. However, if the infection rate is of the form $\tau = \tfrac{\lambda}{n}$ for some $\lambda>1$, then the healings and infections are balanced if the infected set has size $\tfrac{\lambda-1}{\lambda}\cdot n$. The process $X_t$ has a drift towards this equilibrium, which is locally approximately linear as in \eqref{eq:rates0} and \eqref{eq:lindrift}. If $X_0$ is close to $\tfrac{\lambda-1}{\lambda}\cdot n$, our result for random walks on $\mathbb{Z}$ can be applied. We did some extra work to weaken the initial condition. This leads to Theorem \ref{thm:contactprocess}, our main result for the contact process on $K_n$:  there exists a constant $c_1>0$ such that if $X_0 \geq c_1\log(n)$, then $X_t$ is in distribution close to a Gaussian with expectation $\tfrac{\lambda-1}{\lambda}\cdot n$ and variance $\tfrac{n}{\lambda}$ for all $t\in [n,e^{n\cdot g(n)}]$, where $g(n)$ goes to zero arbitrarily slowly.

Studying the metastable distribution and behaviour of the contact process on the complete graph has been the inspiration for this paper. We needed to control the contact process both for growing $t$ and for growing $n$, at the same time. This forced us to study classical objects like simple random walks. Many results in this paper might therefore feel familiar, especially those in Section 2, 3 and 4, and although references to the specific results we needed are not easily found, we do not claim that they are all new; we did add their proofs to be self-contained and complete. Sections 5 and 6 contain the most original results of this paper.

\section{Extinction of birth-death processes}\label{sec:bd_processes}

Before heading to our main results, this section is devoted to some preliminaries. In Section \ref{sec:drift} we elaborate on the notion of drift.  Next, we derive a few results on the extinction of birth-death processes. All processes in this paper are simple random walks on finite or infinite intervals in $\mathbb{Z}$, either in discrete or in continuous time. By a birth-death process we mean the special case where the state space is $\mathbb{N}$ or $[n]:=\{0,1,\ldots,n\}$ and where 0 is an absorbing state. 

In Section \ref{sec:discretedrift} we analyze the extinction time of a discrete-time birth-death process with positive drift. After that, in \ref{sec:poslindrift} and \ref{sec:neglindrift}, we study extinction in the continuous-time process. Usually, one is interested in the expected extinction time, but we aim for tail bounds for the extinction time distribution. We keep our exposition self-contained, since we could not easily find these results in the literature. We will consider both positive and negative drift, corresponding to fast and slow extinction respectively.

\subsection{The notion of drift}\label{sec:drift}

For discrete-time random walks, by a positive drift in state $k$ we just mean that the probability to go right is greater than the probability to go left. For continuous-time random walks, the situation is more subtle and the notion of a linear drift towards zero could be defined in different ways. Here we discuss three possible definitions: linearity of $\lambda_k/\mu_{k+1}$ as in \eqref{eq:rates0}, linearity of $\lambda_k/\mu_{k}$, and linearity of the differences $\lambda_k-\mu_k$ as in \eqref{eq:lindrift}. Each of these definitions has different implications for the random walk, which we illustrate by an example. 
 
Consider a simple random walk $\{W_t\}_{t\geq 0}$ on state space $[n]$ with rates to the right and left respectively
\begin{align}
&\lambda_k = (1+\varepsilon)^{k^2}, &&k=0,\ldots, n-1 \label{eq:examplelam}\\
&\mu_k = (1+\varepsilon)^{k(k-1)}, &&k = 1,\ldots,n,\label{eq:examplemu} 
\end{align}
where $\varepsilon$ is some small positive number. Observe that 
\begin{align}\label{eq:longdrift}
\frac{\lambda_k}{\mu_{k+1}} = (1+\varepsilon)^{-k} \approx 1-k\varepsilon,\qquad k=0,\ldots,n-1,
\end{align}
so that this random walk has an approximately linear drift towards 0. As we will see in Section \ref{sec:statdistr}, this type of drift leads to a stationary distribution which is close to a Gaussian with expectation 0 and variance $\sigma^2 = \varepsilon^{-1}$. In the current example, this Gaussian is truncated at $0$ and $n$. For $n = 100$ and $\varepsilon = 1/1000$, we computed the stationary distribution of $W_t$. In Figure \ref{fig:example}, left plot, a histogram of this distribution is compared to the corresponding truncated Gaussian. The highest probabilities correspond to positions close to 0.

On the other hand, in each state $k$, the rate to the right is higher than the rate to the left. This suggests a linear drift away from 0:
\begin{align}\label{eq:shortdrift}
\frac{\lambda_k}{\mu_k} = (1+\varepsilon)^k \approx 1+k\varepsilon.
\end{align}
Let $\{D_i\}_{i=0}^\infty$ be the jump chain corresponding to $W_t$, i.e. if $T_i$ is the time of the $i$th step of $W_t$, we let 
\begin{align}
D_0 = W_0, \qquad D_i = W_{T_i}, i\geq 1.
\end{align}
$D_i$ is a discrete-time Markov process with transition probabilities proportional to the transition rates of $W_t$. If $p_{k,l}$ is the probability from $k$ to $l$, then 
\begin{align}
p_{k,k+1} = \frac{\lambda_k}{\lambda_k+\mu_k},\qquad p_{k,k-1} = \frac{\mu_k}{\lambda_k+\mu_k},\qquad k = 1,\ldots,n-1.
\end{align} 
Since $D_i$ is periodic, it does not have a stationary distribution. We modify $W_t$ to have a positive rate from $0$ to $0$ and from $n$ to $n$. This does not actually change the stationary distribution of $W_t$, but it allows to assume that 
\begin{align}
p_{0,0} = p_{0,1} = p_{n,n-1}=p_{n,n} = \tfrac12,
\end{align}
making $D_i$ aperiodic. The stationary distribution of $D_i$ is plotted in Figure \ref{fig:example}, right plot. As opposed to $W_t$ itself, the corresponding jump chain $D_i$ is atracted to $n$ rather than to 0. This means that $W_t$ spends much more time close to 0, but the states close to $n$ are visited much more frequently. This is possible because $W_t$ is moving back and forth much faster when close to $n$; the rates in \eqref{eq:examplelam} and \eqref{eq:examplemu} rapidly increase with $k$. 

The example illustrates that the roles of the ratios in \eqref{eq:longdrift} and \eqref{eq:shortdrift} is completely different. One could say that the ratio in \eqref{eq:longdrift} is decisive for the long term behaviour, while the ratio in \eqref{eq:shortdrift} determines the short term behaviour. Indeed, 
\begin{align}
\frac{\lambda_k}{\mu_{k+1}} &= \lim_{t\to\infty}\frac{\mathbb{P}(W_t=k+1)}{\mathbb{P}(W_t=k)},\qquad \frac{\lambda_k}{\mu_k} = \lim_{t\to 0}\frac{\mathbb{P}(W_t=k+1\mid W_0=k)}{\mathbb{P}(W_t=k-1\mid W_0=k)}.
\end{align}
In this paper, we therefore speak about \emph{long term drift} and \emph{short term drift} respectively. We will slightly abuse terminology by calling these drifts positive if the corresponding ratios exceed 1.
 
\begin{figure} 
\includegraphics[width=\linewidth]{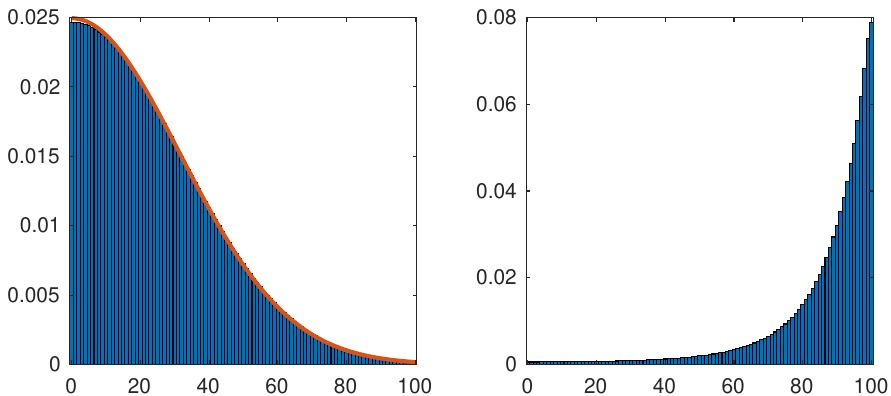}\caption{Stationary distributions of the continuous-time walk $W_t$ (left) and the corresponding jump chain $D_i$ (right).}\label{fig:example}
\end{figure}

Clearly the rates can still be scaled without changing any of the ratios. This is basically a scaling of time which speeds up or slows down the whole process. Such a scaling does influence the difference $\lambda_k-\mu_k$, which is yet another quantity related to the concept of drift. We will call this the \emph{speed} of the process in state $k$, motivated by Lemma \ref{lem:mixing_differential}. The direction of the speed is equal to the direction of the short term drift. 

For our main theorem for random walks on $\mathbb{Z}$ (Theorem \ref{thm:main}) all three notions of drift will play a role. 

\subsection{Discrete-time process with positive drift}\label{sec:discretedrift}

Consider a discrete-time birth-death process $\{X_i,i\in \mathbb{N}\}$ on $[n]$ with constant birth probability $p$ and death probability $q=1-p$. In state $n$, no birth is possible, so the death probability is $1$. In state $0$, births are impossible so that state 0 is an absorbing state, see the diagram below.

\begin{center}
	\begin{tikzpicture}[-latex  ,node distance =2.4 cm and 2.4cm ,on grid ,
	semithick ,
	state/.style ={ circle ,top color =white , bottom color = white!20 ,
		draw, black , text=black , minimum width =1.2 cm,inner sep = 1pt}]
	\node[state] (1) {$0$};
	\node[state] (2) [right =of 1] {$1$};
	\node[state] (3) [right =of 2] {$2$};
	\node[state] (4) [right  =of 3] {$n-2$};
	\node[state] (5) [right  =of 4] {$n-1$};
	\node[state] (6) [right  =of 5] {$n$};
	\draw[->] (1) edge[out=200,in=160,looseness=7] node[left,inner sep=4pt] {$1$} (1);
	\path (2) edge [bend left =20] node[below] {$q$} (1);
	\path (2) edge [bend left =20] node[above] {$p$} (3);
	\path (3) edge [bend left =20] node[below] {$q$} (2);
	\path (3) edge [dotted, bend left =20] node[below] {} (4);
	\path (4) edge [dotted, bend left =20] node[below] {} (3);
	\path (4) edge [bend left =20] node[above] {$p$} (5);
	\path (5) edge [bend left =20] node[below] {$q$} (4);
	\path (5) edge [bend left =20] node[above] {$p$} (6);
	\path (6) edge [bend left =20] node[below] {$1$} (5);
	\end{tikzpicture}
\end{center}  

We will be interested in tail probabilities of the extinction times if the process starts in state $k\in [n]$ for the case $p>q$. There is a drift to the right, so we expect the process to survive for a long time. Our aim is to prove that indeed the probability on quick extinction is very small. The extinction times are given by
\begin{align}
T_k := \inf\{i:X_i=0\mid X_0=k\}.
\end{align}
The next proposition proves that the extinction time $T_n$ is exponential in $n$.

\begin{proposition}\label{prop:extinctiondiscrete}
Let $\{X_i,i\in \mathbb{N}\}$ be a birth-death process on $[n]$ with birth probability $p>\frac12$ and death probability $q=1-p$. Let $X_0=n$. For any $i\geq 0$, the extinction time $T_n$ satisfies
\begin{align}
\mathbb{P}(T_n< i) \leq (i-1)\cdot\left(\frac q p\right)^{n-1}.
\end{align}
\end{proposition}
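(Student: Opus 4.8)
The plan is to bound the probability that the walk, started at $n$, reaches $0$ within $i$ steps by a union bound over the possible times of extinction, and for each fixed target time to compare with the unrestricted random walk on $\mathbb{Z}$. First I would observe that to go from $n$ to $0$ the process must make at least $n$ steps, so $\mathbb{P}(T_n < i) = \sum_{m=n}^{i-1}\mathbb{P}(T_n = m)$, a sum of at most $i-1$ terms (in fact at most $i-n$, but $i-1$ suffices). So it is enough to show that for each fixed $m$ we have $\mathbb{P}(T_n = m) \le (q/p)^{n-1}$, and the stated bound follows.

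For a fixed $m$, the key step is to drop the reflecting boundary at $n$ and the absorbing boundary at $0$ and instead consider the free random walk $\{S_j\}$ on $\mathbb{Z}$ with i.i.d.\ steps $+1$ with probability $p$ and $-1$ with probability $q$, started at $S_0 = n$. The event $\{T_n = m\}$ is contained in the event that this free walk is at $0$ at some point; more usefully, conditioning on the first time the original chain hits $0$ being exactly $m$ forces the walk to take a path from $n$ to $0$ of length $m$ that stays in $\{1,\dots,n\}$ until the final step and ends with a down-step from $1$ to $0$. Each such path has exactly $d$ up-steps and $d+n$ down-steps for some $d \ge 0$ with $2d + n = m$, hence probability $p^d q^{d+n}$. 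I would then bound the total probability of all such paths: the number of paths from $n$ to $0$ of length $m$ with $d$ up-steps is at most $\binom{m}{d}$, so $\mathbb{P}(T_n = m) \le \binom{m}{d} p^d q^{d+n}$. Writing this as $(q/p)^{n-1} \cdot \binom{m}{d} p^{d+1} q^{d+1} \cdot q^{n-2}/\dots$ is messy; cleaner is to factor out $(q/p)^{n-1}$ and show the remaining factor $\binom{m}{d}(pq)^d q$ is at most $1$. Since $p > 1/2 > q$ gives $pq \le 1/4$ and $q \le 1/2$, and $\binom{m}{d} \le \binom{2d+n}{d}$, one checks $\binom{2d+n}{d}(pq)^d q \le \binom{2d+n}{d} 4^{-d} \cdot \tfrac12 \le 1$ using $\binom{2d+n}{d} \le 2^{2d+n-1}$ is too weak; instead I would use the sharper route below.

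A cleaner alternative I would actually carry out: for fixed $m$, sum over \emph{all} paths reaching $0$ by a reflection/last-exit decomposition, or simply note $\mathbb{P}(T_n = m \mid X_0 = n) \le \mathbb{P}(S_m = 0 \mid S_0 = n)$ where $S$ is the free walk, because every trajectory of the original chain that is absorbed exactly at time $m$ corresponds to a free-walk trajectory ending at $0$ at time $m$ (the reflecting and absorbing modifications only affect the dynamics after hitting the boundaries, which is irrelevant before time $T_n$). Then $\mathbb{P}(S_m = 0 \mid S_0 = n) = \binom{m}{(m-n)/2} p^{(m-n)/2} q^{(m+n)/2}$ when $m \equiv n \pmod 2$ and $0$ otherwise. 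Factoring out $(q/p)^{(n)/\,}$: write it as $(q/p)^{n} \cdot \binom{m}{(m-n)/2}(pq)^{(m-n)/2}\cdot q^{?}$ — to land exactly on the exponent $n-1$ I would use the crude bound $\binom{m}{(m-n)/2} p^{(m-n)/2} q^{(m+n)/2} \le (q/p)^{n-1}\cdot q \cdot \big(\binom{m}{(m-n)/2}(4pq)^{(m-n)/2} 4^{-(m-n)/2}\big)\cdots$. The main obstacle is precisely this bookkeeping: getting the exponent down to exactly $n-1$ with a clean constant. The honest way is: $\mathbb{P}(S_m=0\mid S_0=n) \le q^n \cdot \sum_{d\ge 0}\binom{2d+n}{d}(pq)^d = q^n \cdot C(pq)$ where $C(x) = \sum_d \binom{2d+n}{d} x^d$; but since $4pq < 1$ for $p \ne 1/2$, and $\binom{2d+n}{d} \le 4^{d}\cdot 2^n$ (from $\binom{2d+n}{d} \le \binom{2d+n}{d+?}\cdots \le 2^{2d+n}$ and then halving), one gets $q^n \sum_d 4^d 2^n (pq)^d = (2q)^n \cdot \tfrac{1}{1-4pq}$, which is not quite of the desired form either.

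Given these difficulties, the cleanest correct approach — and the one I would commit to — is the \textbf{one-path-at-a-time union bound over the absorption time} combined with the observation that a path absorbed at time $m$ has a \emph{unique} down-step sequence structure near the end, OR, best of all, to use the following: by the Markov property the left-hand side telescopes as $\mathbb{P}(T_n < i) \le \sum_{j=1}^{i-1}\mathbb{P}(X_j = 0, X_{j-1} \ne 0)$, and the one-step transition into $0$ happens only from state $1$ with probability $q$, so $\mathbb{P}(T_n < i) \le \sum_{j=1}^{i-1} q\cdot \mathbb{P}(X_{j-1} = 1)$. Then the real content is to show $\mathbb{P}(X_{j} = 1 \mid X_0 = n) \le (q/p)^{n-1}$ for every $j$. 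This in turn follows from a Gambler's-ruin / supermartingale argument: the quantity $(q/p)^{X_j \wedge T_n}$ (or $(q/p)^{X_j}$ up to the hitting time of $n$, using that $n$ is reflecting so the process never exceeds $n$) shows that $\mathbb{P}(\text{ever reach state } 1 \text{ before time } j) \cdot$ (something) $\le (q/p)^{n-1}$; more directly, for the free walk $(q/p)^{S_j}$ is a martingale, and reaching level $1$ from level $n$ requires a net displacement of $-(n-1)$, which by the optional-stopping/maximal inequality for the martingale $(q/p)^{S_j}$ has probability at most $(q/p)^{-(n)}\cdot(q/p)^{1}= (q/p)^{1-n}$ — wrong direction, so I would instead use that $\mathbb{P}(\min_{k\le j} S_k \le 1) \le \mathbb{P}(\min_{k} S_k \le 1) = (q/p)^{n-1}$ by the standard Gambler's ruin identity for a walk with upward drift (probability of ever going down by $n-1$ is $(q/p)^{n-1}$). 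Since $\{X_{j-1}=1\} \subseteq \{X_k = 1 \text{ for some } k \le j-1\} \subseteq \{\text{free walk from } n \text{ reaches } 1\}$ (valid up to time $T_n$, and on $\{T_n \ge j\}$ the two walks agree), we get $\mathbb{P}(X_{j-1}=1) \le (q/p)^{n-1}$, hence $\mathbb{P}(T_n < i) \le (i-1) q (q/p)^{n-1} \le (i-1)(q/p)^{n-1}$ since $q \le 1$. The main obstacle is thus the careful coupling argument showing the restricted chain's hitting probabilities are dominated by the free walk's, together with invoking the classical Gambler's ruin escape probability $(q/p)^{n-1}$ for the drifted walk.
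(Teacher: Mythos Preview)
Your proposal has a genuine gap that recurs in every variant you try: comparing with the unrestricted walk on $\mathbb{Z}$ goes in the wrong direction because of the \emph{reflecting} boundary at $n$. You claim that ``the reflecting and absorbing modifications only affect the dynamics after hitting the boundaries, which is irrelevant before time $T_n$,'' but the walk starts at $X_0 = n$, so reflection acts immediately (forcing $X_1 = n-1$ deterministically) and again each time the chain revisits $n$ before absorption. Reflection pushes $X$ \emph{downward} relative to the free walk $S$, so $X$ reaches low states with \emph{higher} probability than $S$, not lower. Concretely, take $n = 2$: then $\mathbb{P}(X_1 = 1) = 1$ while $(q/p)^{n-1} = q/p < 1$, so your key claim $\mathbb{P}(X_{j-1} = 1) \leq (q/p)^{n-1}$ already fails; similarly $\mathbb{P}(T_2 = 2) = q$ whereas $\mathbb{P}(S_2 = 0 \mid S_0 = 2) = q^2$, so the inequality $\mathbb{P}(T_n = m) \leq \mathbb{P}(S_m = 0)$ also fails. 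Your final bound would give $\mathbb{P}(T_2 < 3) \leq 2q^2/p$, which is strictly smaller than the true value $q$ whenever $p > 2/3$.

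The paper's proof avoids any free-walk comparison and instead works with excursions from $n$: each visit to $n$ is followed by a step to $n-1$ and then either a return to $n$ or absorption at $0$, the latter with some probability $\pi_n$. A one-line recursion gives $\pi_n \leq (q/p)\pi_{n-1} \leq (q/p)^{n-1}$, and since the extinction time is at least the number of such excursions (which is geometric with success probability $\pi_n$), one obtains $\mathbb{P}(T_n < i) \leq 1 - (1-\pi_n)^{i-1} \leq (i-1)(q/p)^{n-1}$. The essential point is that this argument \emph{respects} the reflecting boundary at $n$ rather than trying to remove it.
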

\begin{proof}
Define $\pi_k$ as the probability of leaving the interval $(0,k)$ at $0$, when starting at $k-1$. Then $\pi_1=1$ and for $2\leq k\leq n$
\begin{align} \pi_k = q\cdot(\pi_{k-1} + (1-\pi_{k-1})\pi_k).\end{align}
This implies that
\begin{align}
\pi_k = \frac{q\pi_{k-1}}{p+q\pi_{k-1}}\leq \frac{q}{p}\pi_{k-1}\leq \left(\frac{q}{p}\right)^{k-1}.
\end{align}
Each time we are at position $n$, we will move to $n-1$ and we will have probability $\pi_n$ to reach $0$ before returning to $n$, and otherwise we end up at $n$ again. The number of attempts to reach $0$ has a geometric distribution with parameter $\pi_n$ and the number of steps we take to reach $0$ is certainly at least the number of these attempts. Therefore,
\begin{align} \mathbb{P}(T_n<i) \leq 1- (1-\pi_n)^{i-1} \leq (i-1)\left(\frac{q}{p}\right)^{n-1}.\end{align}
\end{proof}
Using a straightforward coupling, the same result is valid when all birth probabilities are at least $p$. Note that $\lim_{n\to\infty}\mathbb{P}(T_n<i)= 0$ whenever $i = o((p/q)^n)$. This result is reasonably sharp; one can show that the excursions from $n$ to $n$ have finite expected length (close to $(1-(q/p))^{-1}$) and that $\mathbb{E}[T_n]$ is of order $(p/q)^n$. 


\subsection{Continuous-time process with positive speed}\label{sec:poslindrift}

We now consider the case of a continuous-time birth-death process $\{Y_t,t\geq 0\}$ on $[n]$ with birth rate $\lambda_k$ and death rate $\mu_k$ in state $k$ (which from now on will be standard notation). By definition, $\lambda_0=\mu_0=\lambda_n = 0$. 
Let $p_k$ be the probability on a birth in state $k\in\{1,\ldots,n-1\}$ and $q_k$ the probability on a death. Then
\begin{align}
p_k = \frac{\lambda_k}{\lambda_k+\mu_k},\qquad q_k = \frac{\mu_k}{\lambda_k+\mu_k}.
\end{align}
Now suppose $Y_0=n$. 
We define the extinction time by
\begin{align}
T_n = \inf\{t\geq 0:Y_t=0\}.
\end{align}
The extinction time is finite almost surely. We assume the process to be right-continuous, so that $Y_{T_n}=0$ if $T_n$ is finite. 

The next proposition essentially says that the extinction time grows like the $n$th power of the short term drift (for constant rates). Since $\kappa_n$ below is an upper bound for the absolute speed, $\kappa_nt$ is a scaling of time which adjusts for the speed of the process. In analogy to the discrete-time case, if all birth rates are $\lambda$ and all death rates are $\mu$, then $\lim_{n\to\infty}\mathbb{P}(T_n\leq t) = 0$ if $\kappa_n t = o((\lambda/\mu)^n)$. 

\begin{proposition}\label{prop:extinctioncontinuous} Let $\{Y_t,t\in\mathbb{R}^+\}$ be a birth-death process on $[n]$ with birth rates $\lambda_k^{(n)}$, death rates $\mu_k^{(n)} <\lambda_k^{(n)}$ and $Y_0=n$. Let $\kappa_n = \max_k\{\lambda_k^{(n)}+\mu_k^{(n)}\}$ and $\rho_n = \max_k\{\mu_k^{(n)}/\lambda_k^{(n)}\}$. For any $\varepsilon>0$, there exists $C_1,C_2>0$ such that for all $n\geq 1$ and for all $t>0$ with $\kappa_n t\geq C_1 + C_2n\log(\rho_n^{-1})$,
\begin{align}
	\mathbb{P}(T_n< t)\leq (1+\varepsilon)\kappa_nt\rho_n^{n-1}.
\end{align}
\end{proposition}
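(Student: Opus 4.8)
The plan is to reduce to the discrete-time estimate of Proposition~\ref{prop:extinctiondiscrete} by passing to the jump chain, and then to convert ``number of jumps'' into ``elapsed time'' through a stochastic comparison of the holding times with i.i.d.\ exponentials of rate $\kappa_n$. First, let $\{D_i\}_{i\ge 0}$ be the jump chain of $Y_t$, started at $D_0=n$; it is a discrete-time birth--death process on $[n]$ whose birth probability in an interior state $k$ is $p_k=\lambda_k^{(n)}/(\lambda_k^{(n)}+\mu_k^{(n)})\ge 1/(1+\rho_n)>\tfrac12$, while state $n$ moves deterministically to $n-1$. Write $N=\inf\{i:D_i=0\}$ for the number of jumps until extinction, so that $T_n=\sum_{j=1}^{N}\tau_j$ with $\tau_j$ the holding time in the state $D_{j-1}$. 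Applying Proposition~\ref{prop:extinctiondiscrete} together with the coupling remark following it, with the uniform lower bound $p:=1/(1+\rho_n)$ on the birth probabilities (so that $q/p=\rho_n$; note that only the interior ratios $\mu_k^{(n)}/\lambda_k^{(n)}\le\rho_n$ enter), yields
\begin{align}\label{eq:plan-N}
\mathbb{P}(N<i)\ \le\ (i-1)\,\rho_n^{\,n-1}\qquad\text{for every integer }i\ge 1.
\end{align}

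Next, observe that conditionally on the trajectory of $\{D_i\}$ the holding times $\tau_j$ are independent, with $\tau_j\sim\mathrm{Exp}(\lambda_{D_{j-1}}^{(n)}+\mu_{D_{j-1}}^{(n)})$ and rate at most $\kappa_n$. By the inverse-transform coupling one may therefore realise on the same probability space i.i.d.\ variables $\xi_1,\xi_2,\dots\sim\mathrm{Exp}(\kappa_n)$, independent of $\{D_i\}$ (hence of $N$), with $\tau_j\ge\xi_j$ for all $j$; thus $T_n\ge\sum_{j=1}^{N}\xi_j$. For any integer $m\ge 1$ this gives
\begin{align}\label{eq:plan-split}
\mathbb{P}(T_n<t)\ \le\ \mathbb{P}\Big(\sum_{j=1}^{N}\xi_j<t\Big)\ \le\ \mathbb{P}(N\le m)\ +\ \mathbb{P}\Big(\sum_{j=1}^{m+1}\xi_j<t\Big).
\end{align}
By \eqref{eq:plan-N} the first term is at most $m\,\rho_n^{\,n-1}$; for the second, $\sum_{j=1}^{m+1}\xi_j$ has a $\mathrm{Gamma}(m+1,\kappa_n)$ distribution, so by the Gamma--Poisson identity it equals $\mathbb{P}\big(\mathrm{Pois}(\kappa_n t)\ge m+1\big)$.

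Now take $m=\lceil(1+\tfrac{\varepsilon}{2})\kappa_n t\rceil$. Then $m\,\rho_n^{\,n-1}\le\big((1+\tfrac{\varepsilon}{2})\kappa_n t+1\big)\rho_n^{\,n-1}\le(1+\tfrac{3\varepsilon}{4})\kappa_n t\,\rho_n^{\,n-1}$ once $\kappa_n t\ge 4/\varepsilon$, while $m+1\ge(1+\tfrac{\varepsilon}{2})\kappa_n t$ puts us in the upper-tail large-deviation regime for the Poisson law, so a Chernoff bound gives $\mathbb{P}\big(\mathrm{Pois}(\kappa_n t)\ge m+1\big)\le e^{-c(\varepsilon)\kappa_n t}$ with $c(\varepsilon):=(1+\tfrac{\varepsilon}{2})\log(1+\tfrac{\varepsilon}{2})-\tfrac{\varepsilon}{2}>0$. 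It remains to absorb this exponential term into $\tfrac{\varepsilon}{4}\kappa_n t\,\rho_n^{\,n-1}$. Taking logarithms, $e^{-c(\varepsilon)\kappa_n t}\le\tfrac{\varepsilon}{4}\kappa_n t\,\rho_n^{\,n-1}$ holds whenever $c(\varepsilon)\kappa_n t\ge\log(4/\varepsilon)-\log(\kappa_n t)+(n-1)\log(\rho_n^{-1})$, and since $-\log(\kappa_n t)\le 0$ for $\kappa_n t\ge 1$ it suffices that $\kappa_n t\ge c(\varepsilon)^{-1}\log(4/\varepsilon)+c(\varepsilon)^{-1}n\log(\rho_n^{-1})$. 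Choosing $C_1=\max\{4/\varepsilon,\,1,\,c(\varepsilon)^{-1}\log(4/\varepsilon)\}$ and $C_2=c(\varepsilon)^{-1}$ then yields $\mathbb{P}(T_n<t)\le(1+\varepsilon)\kappa_n t\,\rho_n^{\,n-1}$ for all $n$ and all $t$ with $\kappa_n t\ge C_1+C_2\,n\log(\rho_n^{-1})$.

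The one genuinely delicate point is this final balancing: the error ``too many jumps occur before time $t$'' decays only like $e^{-c(\varepsilon)\kappa_n t}$, whereas the target quantity $\kappa_n t\,\rho_n^{\,n-1}$ can itself be as small as $\rho_n^{\,n-1}$, which is precisely why the threshold on $\kappa_n t$ must be permitted to grow proportionally to $n\log(\rho_n^{-1})$. If one prefers a cleaner route, the crude split \eqref{eq:plan-split} can be replaced by an Abel summation of $\mathbb{P}(N=m)$ against $a_m:=\mathbb{P}(\mathrm{Pois}(\kappa_n t)\ge m)$: this gives $\mathbb{P}(T_n<t)\le\mathbb{E}[a_N]=\sum_{m\ge 1}\mathbb{P}(N\le m)\big(a_m-a_{m+1}\big)$, and then \eqref{eq:plan-N} in the form $\mathbb{P}(N\le m)\le m\,\rho_n^{\,n-1}$ together with $\sum_{m\ge1}m\,\mathbb{P}(\mathrm{Pois}(\kappa_n t)=m)=\kappa_n t$ produces the stronger, unconditional bound $\mathbb{P}(T_n<t)\le\kappa_n t\,\rho_n^{\,n-1}$. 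Either way the proposition follows.
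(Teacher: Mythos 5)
Your proof is correct and follows essentially the same route as the paper's: split $\{T_n<t\}$ according to whether the embedded jump chain has already been absorbed within a prescribed number of steps or that many events have occurred before time $t$, bound the first probability via Proposition \ref{prop:extinctiondiscrete} (using $q_k/p_k\le\rho_n$) and the second by stochastically dominating the event times from below by a sum of i.i.d.\ $\mathrm{Exp}(\kappa_n)$ variables plus a Chernoff bound. Your Gamma--Poisson reformulation of that tail estimate is equivalent to the paper's direct exponential-moment computation, and your bookkeeping of the constants $C_1,C_2$ checks out. Worth highlighting: the Abel-summation variant you sketch at the end, namely $\mathbb{P}(T_n<t)\le\mathbb{E}[a_N]=\sum_{m\ge1}\mathbb{P}(N\le m)\,\mathbb{P}(\mathrm{Pois}(\kappa_nt)=m)\le\rho_n^{\,n-1}\,\mathbb{E}[\mathrm{Pois}(\kappa_nt)]=\kappa_nt\,\rho_n^{\,n-1}$, is correct and actually \emph{strictly stronger} than the stated proposition: it dispenses with both the threshold condition on $\kappa_nt$ and the factor $(1+\varepsilon)$, so it is arguably the cleaner proof and you could have led with it.
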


\begin{proof}
 Define $S_i$ to be the $i$th event time. If $i$ exceeds the total number of events, we set $S_i=\infty$. Now the process  $\{X_i,i\in\mathbb{N}\}$ defined by
\begin{align}
X_i = \begin{cases}Y_{S_i}\qquad\qquad&S_i\leq T_n,\\
0&S_i=\infty,\end{cases}
\end{align}
is a discrete-time birth-death process on $[n]$ with transition probabilities $p_k$ and $q_k$. Choose some $i\in\mathbb{N}$ and $t>0$ and note that the extinction time $T_n$ satisfies
\begin{align}
\mathbb{P}(T_n< t) &= \mathbb{P}(S_i< t, T_n< t)+\mathbb{P}(S_i> t, T_n< t)\nonumber\\
&\leq \mathbb{P}(S_i< t)+\mathbb{P}(S_i=\infty)\nonumber\\& \leq \mathbb{P}(S_i< t)+\mathbb{P}(X_i=0).\label{eq:extinctionprob}
\end{align}
We proceed to bound $\mathbb{P}(S_i< t)$. The $i$th event time is a sum of exponential waiting times with rate at most $\kappa_n$. Therefore, if $E_1,\ldots E_i$ are independent exponential random variables with rate $\kappa_n$, then 
\begin{align}
\mathbb{P}(S_i< t) \leq \mathbb{P}\left(\sum_{j=1}^i E_j< t\right) = \mathbb{P}\left(e^{-a\sum E_j}> e^{-at}\right) \leq \frac{\mathbb{E}\left[e^{-a\sum E_j}\right]}{e^{-at}},
\end{align}
for $a>0$ by Markov's inequality. Using independence of the $E_j$,
\begin{align}
\mathbb{P}(S_i< t) \leq \left(\frac{\kappa_n}{a+\kappa_n}\right)^i\cdot e^{at}.
\end{align}
Finally, optimizing over $a$ gives $a = \frac i t-\kappa_n$, so that
\begin{align}
\mathbb{P}(S_i< t) \leq \left(\frac{\kappa_n t}{i}\right)^i\cdot e^{i-\kappa_n t}.
\end{align}
 The bound is trivial for $i\leq \kappa_n t$, but is less than 1 and decreasing for $i>\kappa_n t$. 
%
First assume that we would have $\kappa_nt\geq2+\frac{4}{\varepsilon}$. Then we can choose $i$ as follows:
 \begin{align}
 \kappa_nt+1\leq \frac{1+\tfrac{3\varepsilon}{4}}{1+\tfrac{\varepsilon}{2}}\kappa_nt\leq i = \left\lfloor \frac{1+\varepsilon}{1+\tfrac{\varepsilon}{2}}\kappa_nt\right\rfloor \leq \frac{1+\varepsilon}{1+\tfrac{\varepsilon}{2}}\kappa_nt.
 \end{align}
Choose $\delta$ such that $i = e^{\delta}\kappa_nt$, so 
  \begin{align}
  \log\left(\frac{1+\frac{3\varepsilon}{4}}{1+\frac{\varepsilon}{2}}\right) \leq \delta \leq \log\left(\frac{1+\varepsilon}{1+\frac{\varepsilon}{2}}\right),
  \end{align}
  and call the lower bound $\tilde \delta$. Define $\eta = e^{-\tilde\delta} - (1-\tilde\delta)>0$. Then
  \begin{align}
  \mathbb{P}(S_i< t) &\leq \exp(-\delta i +i-\kappa_n t) =\exp{((1-\delta - e^{-\delta})e^\delta\kappa_n t)} \leq \exp(-\eta e^\delta\kappa_nt).
  \end{align}
  Define $C_1$ and $C_2$, which only depend on $\varepsilon$, to be
  \begin{align} 
  C_1 = \max\left\{\frac{1}{\eta}\log\left(\frac{2}{\varepsilon}\right),2+\frac{4}{\varepsilon}\right\}\ \mbox{and}\ C_2 = \frac{1}{\eta}.
  \end{align}
  Then for $\kappa_nt\geq C_1 + C_2 n \log(\rho_n^{-1})$, choose $i$ and $\delta$ as above. We conclude that
  \begin{align}
  \mathbb{P}(S_i< t) &\leq \exp(-\eta  \kappa_nt) \leq \frac12 \varepsilon \rho_n^n \leq \frac12 \varepsilon e^\delta \kappa_n t\rho_n^{n-1}.
  \end{align}
  The other term in (\ref{eq:extinctionprob}) can be bounded using Proposition \ref{prop:extinctiondiscrete} and noting that $q_k/p_k\leq \rho_n$ for all $k$: 
  \begin{align}
  \mathbb{P}(X_i=0) \leq (i-1)\rho_n^{n-1} \leq e^{\delta}\kappa_n t \rho_n^{n-1}. 
  \end{align}
  Adding the two terms, we get
  \begin{align}
  \mathbb{P}(T_n< t) \leq (1+\varepsilon)\kappa_n t \rho_n^{n-1},
  \end{align}
  completing the proof.
\end{proof}

%

\subsection{Continuous-time process with negative linear speed}\label{sec:neglindrift}

Here we again consider a continuous-time birth death process $\left\{Y_t,t\geq 0\right\}$ on $[n]$ with rates $\lambda_k$ and $\mu_k$. The next lemma is standard and therefore presented without proof. It gives a differential equation for $\mathbb{E}[Y_t]$ in terms of the rates. It demonstrates that $\lambda_k-\mu_k$ is a measure of the expected speed of a walker in state $k$. 

\begin{lemma}\label{lem:mixing_differential}
	Let $\{Y_t,t\in\mathbb{R}^+\}$ be a birth-death process on $[n]$, with arbitrary initial distribution $Y_0$. The expected position $\mathbb{E}[Y_t]$ at time $t>0$ satisfies
	\begin{align}
	\frac{d}{dt}\mathbb{E}[Y_t] = \mathbb{E}[\lambda_{Y_t}-\mu_{Y_t}]
	\end{align}
\end{lemma}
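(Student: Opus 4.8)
The plan is to work directly from the Kolmogorov forward equations for the finite-state chain $Y_t$. Writing $p_k(t) = \mathbb{P}(Y_t = k)$ for $k\in[n]$, the generator of a birth-death process gives
\[
\frac{d}{dt}p_k(t) = \lambda_{k-1}p_{k-1}(t) + \mu_{k+1}p_{k+1}(t) - (\lambda_k+\mu_k)p_k(t),
\]
with the usual boundary conventions $\lambda_{-1}=\mu_0=\lambda_n=\mu_{n+1}=0$ and $p_{-1}=p_{n+1}=0$. Because the state space is finite, each $p_k(t)$ is smooth in $t$ and there are no integrability or explosion issues; this is what makes the computation below rigorous without extra hypotheses, and is the reason the lemma is stated only for $[n]$.

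Next I would differentiate $\mathbb{E}[Y_t]=\sum_{k=0}^n k\,p_k(t)$ term by term (legitimate, being a finite sum) and substitute the forward equations, obtaining
\[
\frac{d}{dt}\mathbb{E}[Y_t] = \sum_{k=0}^n k\big(\lambda_{k-1}p_{k-1}(t) + \mu_{k+1}p_{k+1}(t) - (\lambda_k+\mu_k)p_k(t)\big).
\]
Shifting the index in the first two sums, $\sum_k k\lambda_{k-1}p_{k-1} = \sum_j (j+1)\lambda_j p_j$ and $\sum_k k\mu_{k+1}p_{k+1} = \sum_j (j-1)\mu_j p_j$ (the boundary conventions make these reindexings exact), and collecting the coefficient of $p_j(t)$ leaves $(j+1)\lambda_j + (j-1)\mu_j - j\lambda_j - j\mu_j = \lambda_j - \mu_j$. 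Hence
\[
\frac{d}{dt}\mathbb{E}[Y_t] = \sum_{j=0}^n (\lambda_j-\mu_j)\,p_j(t) = \mathbb{E}[\lambda_{Y_t}-\mu_{Y_t}],
\]
which is the claim.

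There is essentially no hard step: the only point requiring care is the bookkeeping at the boundary states $0$ and $n$, which is handled automatically once one adopts the convention that rates and probabilities vanish outside $[n]$. A slightly slicker alternative would be to invoke $\frac{d}{dt}\mathbb{E}[f(Y_t)] = \mathbb{E}[(\mathcal{L}f)(Y_t)]$ for the generator $\mathcal{L}$, valid on a finite state space for every $f$, and evaluate $(\mathcal{L}f)(k) = \lambda_k(f(k+1)-f(k)) + \mu_k(f(k-1)-f(k))$ at $f(k)=k$, giving $\lambda_k-\mu_k$; this is the same computation organised differently.
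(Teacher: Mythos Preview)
Your proof is correct. The paper does not actually give a proof of this lemma; it states that the result ``is standard and therefore presented without proof,'' so there is nothing to compare against beyond noting that your forward-equation computation (and the generator variant you mention) is exactly the standard argument the authors have in mind.
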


%
%

We are now interested in the case where the speed $\lambda_k-\mu_k$ is negative and linear in $k$. The speed is constant over time, but may depend on $n$, for instance $\lambda_k-\mu_k=-k/n$. So we actually consider a sequence of processes, indexed by $n$. The following lemma then gives asymptotically ($n\to\infty$) almost sure upper bounds on the extinction time, which depend on the speed. For fixed $n$, a linear speed towards 0 guarantees exponentially fast extinction.

\begin{lemma}\label{lem:linear_drift} Suppose there is some constant $d = d(n) >0$ such that the transition rates of the birth death process $\{Y_t\}$ satisfy $\lambda_k-\mu_k = -d\cdot k$ for all $0\leq k\leq n$. Take any initial distribution $Y_0$. Then
	\begin{align}
\mathbb{P}(T_n > t) \leq n\cdot e^{-d\cdot t}.
	\end{align}

\end{lemma}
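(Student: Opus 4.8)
The plan is to bound the extinction time via a supermartingale / Markov-inequality argument applied directly to $\mathbb{E}[Y_t]$. First I would use Lemma~\ref{lem:mixing_differential} together with the hypothesis $\lambda_k-\mu_k=-dk$ to compute
\begin{align}
\frac{d}{dt}\mathbb{E}[Y_t] = \mathbb{E}[\lambda_{Y_t}-\mu_{Y_t}] = \mathbb{E}[-d\cdot Y_t] = -d\,\mathbb{E}[Y_t],
\end{align}
so that $\mathbb{E}[Y_t] = \mathbb{E}[Y_0]\,e^{-dt} \leq n\,e^{-dt}$, since $Y_0 \leq n$ almost surely.

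Next I would relate the tail of the extinction time to this expectation. The key observation is that once $Y$ hits $0$ it stays there (state $0$ is absorbing), so $\{T_n > t\}$ is exactly the event $\{Y_t \geq 1\}$. Since $Y_t$ is a nonnegative integer-valued random variable, Markov's inequality gives
\begin{align}
\mathbb{P}(T_n > t) = \mathbb{P}(Y_t \geq 1) \leq \mathbb{E}[Y_t] \leq n\,e^{-dt},
\end{align}
which is precisely the claimed bound.

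The only subtlety — and the step I would be most careful about — is justifying the differential equation argument and the monotonicity needed to pass from $\mathbb{E}[Y_t]=\mathbb{E}[Y_0]e^{-dt}$ to the statement: one must check that Lemma~\ref{lem:mixing_differential} applies with the given (possibly $n$-dependent) rates and that $\mathbb{E}[Y_t]$ is genuinely differentiable and solves the ODE globally, rather than the relation $\tfrac{d}{dt}\mathbb{E}[Y_t]=-d\,\mathbb{E}[Y_t]$ holding only formally. Since the state space $[n]$ is finite, $t\mapsto \mathbb{E}[Y_t]$ is smooth (it is a finite linear combination of exponentials coming from the generator), so the ODE can be integrated rigorously and the solution is $\mathbb{E}[Y_0]e^{-dt}$; here I would invoke that $\mathbb{E}[Y_0]\le n$ regardless of the initial distribution. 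Everything else is routine, so I do not expect a real obstacle — the lemma is essentially a one-line consequence of Lemma~\ref{lem:mixing_differential} plus the absorption property of state $0$ and Markov's inequality.
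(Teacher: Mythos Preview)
Your proposal is correct and essentially identical to the paper's proof: both apply Lemma~\ref{lem:mixing_differential} to get $\mathbb{E}[Y_t]=\mathbb{E}[Y_0]e^{-dt}$, then use $\mathbb{P}(T_n>t)=\mathbb{P}(Y_t\geq 1)\leq \mathbb{E}[Y_t]$ via Markov's inequality. The only cosmetic difference is that the paper first reduces to $Y_0\equiv n$ by a coupling argument, whereas you bound $\mathbb{E}[Y_0]\leq n$ directly.
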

\begin{proof}
	First note that by a coupling argument we can assume that $Y_0 \equiv n$. Now Lemma \ref{lem:mixing_differential} gives us 
	\begin{align}
	\frac{d}{dt}\mathbb{E}[Y_t] = -d\cdot \mathbb{E}[Y_t],
	\end{align}
	with solution $\mathbb{E}[Y_t] = \mathbb{E}[Y_0]\cdot e^{-d\cdot t} = n\cdot e^{-d\cdot t}$.
	Markov's inequality gives 
	\begin{align}
	\mathbb{P}(T_n > t) = \mathbb{P}(Y_t\geq 1)\leq \mathbb{E}[Y_t] = n\cdot e^{-d\cdot t}.
	\end{align}
	\end{proof}

\section{A random walk with Gaussian stationary distribution}\label{sec:statdistr}

In this section, we study a continuous-time random walk $\{W_t^{(n)},t\in\mathbb{R}^+\}$ on $\mathbb{Z}\cap [-b_n,b_n]$ which depends on $n$ and has transition rates
\begin{eqnarray}
k\rightarrow k+1 & \text{with rate} & \lambda_k^{(n)} \qquad\text{for}\qquad -b_n\leq k \leq b_n-1,\\
k\rightarrow k-1 & \text{with rate} & \mu_k^{(n)} \qquad\text{for}\qquad b_n+1\leq k \leq b_n.
\end{eqnarray}
It will turn out that the long term drift determines the stationary distribution. Let $\sigma_n$ be an increasing positive function of $n$ with $\sigma_n\to\infty$ and take $b_n$ and $a_n$ such that 
\begin{equation}\label{eq:defbn}
\left\lceil\sigma_n\sqrt{2\log(\sigma_n)}\right\rceil \leq a_n \leq b_n\qquad\text{and}\qquad a_n/\sigma_n^2\to 0.
\end{equation}
Assume that for each $n$ there is a function $\delta_n$ taking small values and $\eta_n\geq 0$ such that the rates statisfy
\begin{equation}\label{eq:rates1}
\log\left(\frac{\lambda_k^{(n)}}{\mu_{k+1}^{(n)}}\right) \left\{ \begin{array}{ll} \geq \eta_n & \text{if}\quad -b_n\leq k <- a_n\\
= -\frac{k}{\sigma_n^2} + \delta_n(k)& \text{if}\quad -a_n\leq k \leq a_n\\
\leq -\eta_n & \text{if}\quad a_n<k \leq b_n.
\end{array}\right.
\end{equation}
In the interval $[-a_n,a_n]$, this walk has a long term drift towards the origin $0$, which is approximately linear in $k$:
\begin{align}
\frac{\lambda_k^{(n)}}{\mu_{k+1}^{(n)}} \approx 1-\frac{k}{\sigma_n^2}+\delta_n(k).
\end{align}
We introduce the error term $\delta_n(k)$ to allow the ($\log$ of the) drift to be not exactly linear in $k$. When very far away ($|k|>a_n$), there is simply a constant drift towards the origin or no drift at all. 

We will prove that with a proper scaling in $n$ the stationary distribution of this random walk converges to a Gaussian as $n\to\infty$ under suitable conditions on the parameters. There will be \emph{no} scaling in $t$. So for fixed $n$, we can take the limit $t\to\infty$ and $W_t^{(n)}$ will converge in distribution.

Condition \eqref{eq:rates1} is not yet sufficient to conclude that the random walk quickly reaches its stationary distribution. For this we will need more control on the rates, see also the discussion in Section \ref{sec:drift}. This issue will be settled in Section \ref{sec:mixing}. 

\subsection{Derivation of the stationary distribution}

The walk $W_t^{(n)}$ has a finite state space and is irreducible. It therefore has a unique stationary distribution $\pi_n(k)$.
\begin{proposition}\label{prop:statdist} For each $n$, let $b_n\geq a_n$ be integers satisfying $\eqref{eq:defbn}$ and take rates as in (\ref{eq:rates1}). 
Let $K$ be a constant and assume that 
\begin{align}\label{eq:boundondelta}
	\sum_{k=-a_n}^{a_n}|\delta_n(k)|\leq K\cdot\frac{a_n}{\sigma_n^2}.
\end{align}
Furthermore, assume that
\begin{equation}\label{eq:boundoneta}
\eta_n \geq \sqrt{\frac{2}{\pi}}\frac{\sigma_n}{a_n}e^{-\frac12 a_n^2/\sigma_n^2}\ \ \text{or}\ \ b_n-a_n \leq \sqrt{\frac{\pi}{2}} \frac{a_n}{\sigma_n}e^{\frac12 a_n^2/\sigma_n^2}.
\end{equation}
	Then for $n$ large enough
	\begin{align}\label{eq:piresult}
	\pi_n(k) = \frac{1}{\sqrt{2\pi}\sigma_n}\exp\left\{-\frac12 \frac{k^2}{\sigma_n^2}+\varepsilon_{n,k}\right\},\qquad  -a_n\leq k\leq a_n,
	\end{align}
	with $|\varepsilon_{n,k}| \leq (4+2K)a_n/\sigma_n^2$, and
	\begin{align}
	\sum_{\{k\ :\ a_n<|k|\leq b_n\}} \pi_n(k) \leq \frac{a_n}{\sigma_n^2}e^{(3+2K)a_n/\sigma_n^2}. 
	\end{align}
\end{proposition}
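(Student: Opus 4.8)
The plan is to exploit the standard detailed-balance formula for a birth-death chain: for the restricted walk on $[-b_n,b_n]$, the stationary distribution satisfies $\pi_n(k+1)/\pi_n(k) = \lambda_k^{(n)}/\mu_{k+1}^{(n)}$, so that for any $k$ in $[-a_n,a_n]$ one can write $\pi_n(k) = \pi_n(0)\prod_{j}\lambda_j^{(n)}/\mu_{j+1}^{(n)}$, and by \eqref{eq:rates1} the logarithm of this product telescopes into $-\sum_{j} j/\sigma_n^2 + \sum_j \delta_n(j)$. First I would carry out this telescoping carefully for both signs of $k$, using $\sum_{j=0}^{k-1} j = \tfrac12 k(k-1)$, to obtain $\log(\pi_n(k)/\pi_n(0)) = -\tfrac12 k^2/\sigma_n^2 + \tfrac12 |k|/\sigma_n^2 + R_n(k)$ where $|R_n(k)| \le \sum_{|j|\le a_n}|\delta_n(j)| \le K a_n/\sigma_n^2$ by \eqref{eq:boundondelta}. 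The linear correction term $\tfrac12|k|/\sigma_n^2$ is at most $\tfrac12 a_n/\sigma_n^2$, so it gets absorbed into the error $\varepsilon_{n,k}$ as well; at this stage I have $\pi_n(k) = \pi_n(0)\exp\{-\tfrac12 k^2/\sigma_n^2 + \varepsilon'_{n,k}\}$ with $|\varepsilon'_{n,k}| \le (K+\tfrac12)a_n/\sigma_n^2$ — say bounded by $(1+K)a_n/\sigma_n^2$ for $n$ large.

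The substantive step is pinning down the normalizing constant $\pi_n(0)$ and showing it equals $\tfrac{1}{\sqrt{2\pi}\sigma_n}$ up to a multiplicative error of the same order $e^{O(a_n/\sigma_n^2)}$. Writing $Z_n = \sum_{k=-b_n}^{b_n}\pi_n(k)/\pi_n(0) = 1/\pi_n(0)$, I would split $Z_n$ into the "bulk" sum over $|k|\le a_n$ and the "tail" sum over $a_n < |k| \le b_n$. For the bulk, $\sum_{|k|\le a_n}\exp\{-\tfrac12 k^2/\sigma_n^2\}$ is a Riemann sum for $\int_{-\infty}^{\infty} e^{-x^2/(2\sigma_n^2)}dx = \sqrt{2\pi}\sigma_n$; the approximation error of the Riemann sum, together with the truncation error from cutting at $\pm a_n$ (which is where $a_n \ge \sigma_n\sqrt{2\log\sigma_n}$ enters, making the Gaussian tail beyond $a_n$ of order $\sigma_n/a_n \cdot e^{-a_n^2/(2\sigma_n^2)}$, i.e. $o(1)$), and the factor $e^{\varepsilon'_{n,k}}$, all contribute multiplicatively $1 + O(a_n/\sigma_n^2)$. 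For the tail sum over $a_n < |k| \le b_n$, I use the monotonicity built into \eqref{eq:rates1}: beyond $a_n$ each successive ratio is at most $e^{-\eta_n}$, so $\pi_n(k)/\pi_n(0)$ for $k>a_n$ is bounded by $\pi_n(a_n)/\pi_n(0)$ times a geometric factor $e^{-\eta_n(k-a_n)}$. Summing the geometric series gives a tail bounded by $\pi_n(a_n)/\pi_n(0) \cdot \min\{\eta_n^{-1}, b_n-a_n\}$, and the two alternatives in hypothesis \eqref{eq:boundoneta} are precisely what is needed to make this at most (a constant times) $\tfrac{a_n}{\sigma_n^2}$ relative to the bulk — indeed $\pi_n(a_n)/\pi_n(0) \le e^{(1+K)a_n/\sigma_n^2} e^{-a_n^2/(2\sigma_n^2)}$, and multiplying by $\sqrt{\pi/2}\,(a_n/\sigma_n)e^{a_n^2/(2\sigma_n^2)}$ or by $\sqrt{2/\pi}\,(\sigma_n/a_n)e^{-a_n^2/(2\sigma_n^2)}\cdot$ (number of terms $\le b_n-a_n$)$^{-1}$ gives the claimed bound $\tfrac{a_n}{\sigma_n^2}e^{(3+2K)a_n/\sigma_n^2}$ after collecting constants into the exponent.

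Putting these together: $Z_n = \sqrt{2\pi}\sigma_n(1 + O(a_n/\sigma_n^2))$, hence $\pi_n(0) = \tfrac{1}{\sqrt{2\pi}\sigma_n}e^{\varepsilon_{n,0}}$ with $|\varepsilon_{n,0}| = O(a_n/\sigma_n^2)$, and combining with the bulk estimate yields \eqref{eq:piresult} with $|\varepsilon_{n,k}| \le (4+2K)a_n/\sigma_n^2$ once all the $O(\cdot)$ constants are tracked — the "$4$" being the slack that accommodates the Riemann-sum error, the linear $\tfrac12|k|/\sigma_n^2$ term, the truncation error, and the normalization error. The tail bound follows from the geometric estimate above. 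The main obstacle I anticipate is bookkeeping: making the Riemann-sum-versus-integral comparison quantitative enough to land inside an error of order exactly $a_n/\sigma_n^2$ (as opposed to, say, $1/\sigma_n$ or $a_n^2/\sigma_n^4$), and verifying that the two branches of \eqref{eq:boundoneta} each independently suffice for the tail; both require care with constants but no new ideas. One should also double-check the case distinction $k \ge 0$ versus $k < 0$ in the telescoping, since the product runs in opposite directions and the sign of the linear correction flips, but it is symmetric enough to handle uniformly by writing everything in terms of $|k|$.
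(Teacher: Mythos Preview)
Your proposal is correct and follows essentially the same route as the paper: detailed balance to express $\pi_n(k)/\pi_n(0)$ as a telescoping product, absorb the $\tfrac12|k|/\sigma_n^2$ discreteness term and $\sum|\delta_n|$ into an error of size $(K+1)a_n/\sigma_n^2$, then pin down $\pi_n(0)$ by splitting the normalizing sum into bulk plus geometric tail and invoking the two alternatives in \eqref{eq:boundoneta}. The paper packages the Riemann-sum step you describe into a short technical lemma showing $\sum_{|k|\le a_n} e^{-k^2/(2\sigma_n^2)} = \sqrt{2\pi}\sigma_n\,e^{\pm a_n/\sigma_n^2}$, which is exactly the quantitative comparison you flag as the main bookkeeping obstacle; otherwise the arguments match step for step.
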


\begin{remark}
Suppose the rates satisfy
\begin{equation}\label{eq:rates2}
\frac{\lambda_k^{(n)}}{\mu_{k+1}^{(n)}} \left\{ \begin{array}{ll} \geq \exp(\eta_n) & \text{if}\quad -b_n\leq k <- a_n\\
= 1-\frac{k}{\sigma_n^2}+\delta_n(k)& \text{if}\quad -a_n\leq k \leq a_n\\
\leq \exp(-\eta_n) & \text{if}\quad a_n<k \leq b_n,
\end{array}\right.
\end{equation}
with $\sum_{k=-a_n}^{a_n}|\delta_n(k)|\leq K\cdot a_n/\sigma_n^2$. Then 
\begin{align}
\sum_{k=-a_n}^{a_n} \left|\log\left(\frac{\lambda_k^{(n)}}{\mu_{k+1}^{(n)}}\right)+\frac{k}{\sigma_n^2}\right| &=\sum_{k=-a_n}^{a_n} \left|\delta_n(k)+O\left(\left(\frac{k}{\sigma_n^2}+\delta_n(k)\right)^2\right)\right|\\
&\leq K\cdot\frac{a_n}{\sigma_n^2}+O\left(\frac{a_n^3}{\sigma_n^4}\right) = (K+o(1))\cdot\frac{a_n}{\sigma_n^2}, 
\end{align}
so that \eqref{eq:rates2} is an alternative to \eqref{eq:rates1} under which Proposition \ref{prop:statdist} holds as well.
\end{remark}

\begin{remark}
The error term $\varepsilon_{n,k}$ in \eqref{eq:piresult} becomes larger if $a_n$ is increased. This might be unexpected, since we have more control on the rates in \eqref{eq:rates1}. However, when $a_n$ increases, at the same time the assumptions in \eqref{eq:boundondelta} and \eqref{eq:boundoneta} are weakened, which explains why the error term increases. The choice of $a_n$ can be adapted to the situation in which one wants to apply the results, but should satifsy \eqref{eq:defbn}.
\end{remark}

\begin{proof}[Proof of Proposition \ref{prop:statdist}]
	Fix $n\geq 1$. Then for all $k = -b_n,\ldots,b_n-1$, the stationary distribution satisfies
	\begin{align}\label{eq:piratio0}
	\frac{\pi_n(k+1)}{\pi_n(k)} = \frac{\lambda_k^{(n)}}{\mu_{k+1}^{(n)}}.
	\end{align}
	Iterating gives for all $k = 1,\ldots,a_n$,
	\begin{align}
	\frac{\pi_n(k)}{\pi_n(0)} &= \prod_{i=0}^{k-1} \frac{\lambda_i^{(n)}}{\mu_{i+1}^{(n)}}  =\exp\left\{-\sum_{i=0}^{k-1} \frac{i}{\sigma_n^2}+\sum_{i=0}^{k-1}\delta_n(i)\right\}\\
	\frac{\pi_n(0)}{\pi_n(-k)} &= \prod_{i=1}^{k} \frac{\lambda_{-i}^{(n)}}{\mu_{-i+1}^{(n)}}  =\exp\left\{\sum_{i=1}^{k} \frac{i}{\sigma_n^2}+\sum_{i=1}^{k}\delta_n(-i)\right\}.
	\label{eq:piratio}
	\end{align}
	This implies that for all $-a_n\leq k\leq a_n$ 
	\begin{equation}\label{eq:piineq}
	\pi_n(0)\exp\left\{-\frac12\frac{k^2}{\sigma_n^2}\right\}e^{-(K+1)a_n/\sigma_n^2} \leq \pi_n(k)\leq \pi_n(0)\exp\left\{-\frac12\frac{k^2}{\sigma_n^2}\right\}e^{(K+1)a_n/\sigma_n^2}.
	\end{equation}
	Note that it is hard to improve on these inequalities, since they are also caused by the discrete nature of the probability distribution $\pi_n$. Lemma \ref{lem:sumnormdens} (see Appendix) states that for $a\geq \sigma\sqrt{2\log(\sigma)}$ and $\sigma$ large enough,
	\begin{align}
	\sqrt{2\pi}\sigma e^{-a/\sigma^2} \leq \sum_{k=-a}^{a} e^{-\frac12\frac{k^2}{\sigma^2}} \leq \sqrt{2\pi}\sigma e^{a/\sigma^2}.
	\end{align}
	We use this to obtain
	\begin{align}\label{eq:pisum}
	 \pi_n(0)\sqrt{2\pi}\sigma_n e^{-(K+2)a_n/\sigma_n^2}\leq \sum_{k = -a_n}^{a_n} \pi_n(k) \leq \pi_n(0)\sqrt{2\pi}\sigma_n e^{(K+2)a_n/\sigma_n^2}.
	\end{align}
	For $a_n<k\leq b_n$, by \eqref{eq:rates1} and  \eqref{eq:piratio0} we get
	\begin{align}\label{eq:piineqfar}
	\pi_n(k)\leq \pi_n(a_n)e^{-\eta_n (k-a_n)}\ \ \text{and}\ \ \pi_n(-k)\leq  \pi_n(-a_n)e^{-\eta_n (k-a_n)}.
	\end{align}
	This implies that (using \eqref{eq:piineq} and then \eqref{eq:boundoneta}),
	\begin{align}\label{eq:pisumfar}
	\sum_{a_n+1\leq |k| \leq b_n}\pi_n(k) &\leq (\pi_n(a_n)+\pi_n(-a_n))\sum_{k=1}^{b_n-a_n}e^{-\eta_nk}\nonumber\\
	&\leq 2\pi_n(0)e^{-\frac12 \frac{a_n^2}{\sigma_n^2}}e^{(K+1)a_n/\sigma_n^2} \cdot \min\{b_n-a_n,\int_0^\infty e^{-\eta_ns}ds\}\nonumber\\
	&\leq \pi_n(0)e^{(K+1)a_n/\sigma_n^2} \cdot \frac{\sqrt{2\pi}a_n}{\sigma_n}.
	\end{align}
	Now we will bound $\pi_n(0)$ by using that 
	\begin{align}\label{eq:pisum1}
	\sum_{k=-a_n}^{a_n} \pi_n(k) \leq \sum_{k=-b_n}^{b_n} \pi_n(k) = 1.
	\end{align}
	Combining with the left hand side of \eqref{eq:pisum} gives the upper bound 
	\begin{align}
	\pi_n(0) \leq \frac{1}{\sqrt{2\pi}\sigma_n} e^{(K+2)a_n/\sigma_n^2}.
	\end{align}
	The right hand side of \eqref{eq:pisum} together with \eqref{eq:pisumfar} and \eqref{eq:pisum1} leads to
	\begin{align}
	1 \leq \pi_n(0)\sqrt{2\pi}\sigma_n e^{(K+2)a_n/\sigma_n^2}(1+\frac{a_n}{\sigma_n^2})\leq \pi_n(0)\sqrt{2\pi}\sigma_n e^{(K+3)a_n/\sigma_n^2},
	\end{align}
	which gives the lower bound
	\begin{align}
	\pi_n(0) \geq \frac{1}{\sqrt{2\pi}\sigma_n} e^{-(K+3)a_n/\sigma_n^2}.
	\end{align}
	Using \eqref{eq:piineq} for the last time gives the final result \eqref{eq:piresult} for all $-a_n\leq k\leq a_n$. Note that we have also proved that
	\[ \sum_{\{k\ :\  a_n<|k|\leq b_n\}} \pi_n(k) \leq \frac{a_n}{\sigma_n^2} e^{(2K+3)a_n/\sigma_n^2}.\]
\end{proof}

\begin{theorem}\label{thm:statdistr}
Let $W_\infty^{(n)}$ be a random variable corresponding to the stationary distribution of the random walk $W_t^{(n)}$, i.e. $\mathbb{P}(W_\infty^{(n)}=k)=\pi_n(k)$. Then, under the conditions of Proposition \ref{prop:statdist} and as $n\to\infty$,
\begin{align}
\frac{W_\infty^{(n)}}{\sigma_n} \stackrel{d}{\longrightarrow} Z,
\end{align}
where $Z$ is a standard Gaussian.
\end{theorem}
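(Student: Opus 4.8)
The plan is to deduce the distributional convergence directly from the explicit description of $\pi_n$ in Proposition \ref{prop:statdist}. Since $\sigma_n\to\infty$, it suffices to show that for every fixed $x\in\mathbb{R}$, $\mathbb{P}(W_\infty^{(n)}/\sigma_n \leq x) \to \Phi(x)$, where $\Phi$ is the standard normal CDF; equivalently, $\sum_{k\leq x\sigma_n}\pi_n(k)\to\Phi(x)$. First I would fix $x$ and split the sum according to the ranges in \eqref{eq:rates1}: the ``tail'' part with $a_n < |k|\leq b_n$ and the ``bulk'' part with $-a_n\leq k\leq \min(a_n,\lfloor x\sigma_n\rfloor)$. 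For the tail part, Proposition \ref{prop:statdist} gives $\sum_{a_n<|k|\leq b_n}\pi_n(k)\leq (a_n/\sigma_n^2)e^{(3+2K)a_n/\sigma_n^2}$, and since $a_n/\sigma_n^2\to 0$ by \eqref{eq:defbn}, this bound tends to $0$; so the tail contributes nothing in the limit.

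Next I would handle the bulk. Using the representation $\pi_n(k) = \frac{1}{\sqrt{2\pi}\sigma_n}\exp\{-\tfrac12 k^2/\sigma_n^2 + \varepsilon_{n,k}\}$ with $|\varepsilon_{n,k}|\leq (4+2K)a_n/\sigma_n^2 =: \epsilon_n \to 0$, I would bound the bulk sum above and below by $e^{\pm\epsilon_n}$ times $\frac{1}{\sqrt{2\pi}\sigma_n}\sum_{-a_n\leq k\leq \min(a_n,\lfloor x\sigma_n\rfloor)} e^{-\tfrac12 k^2/\sigma_n^2}$. The remaining step is to recognize this last sum as a Riemann sum: $\frac{1}{\sigma_n}\sum_{k} e^{-\tfrac12(k/\sigma_n)^2}$ over the relevant range of $k/\sigma_n$ converges to $\int_{-\infty}^{\min(x,\infty)} e^{-u^2/2}\,du = \sqrt{2\pi}\,\Phi(x)$ (when $x\sigma_n \leq a_n$ eventually, which holds for fixed $x$ since $a_n\geq \lceil\sigma_n\sqrt{2\log\sigma_n}\rceil\to\infty$ faster than $x\sigma_n$; and the lower endpoint $-a_n/\sigma_n\to-\infty$). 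This is the standard mesh-$1/\sigma_n$ Riemann approximation of the Gaussian integral, and the error is $O(1/\sigma_n)$ plus the negligible Gaussian tail beyond $\pm a_n/\sigma_n$. Combining, the bulk sum tends to $\Phi(x)$, and together with the vanishing tail we obtain $\mathbb{P}(W_\infty^{(n)}/\sigma_n\leq x)\to\Phi(x)$ at every $x$, which is convergence in distribution to $Z$ since $\Phi$ is continuous.

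The main obstacle is really just bookkeeping: making sure the Riemann-sum approximation is uniform enough and that, for fixed $x$, the relevant $k$-range $\{-a_n\leq k\leq \lfloor x\sigma_n\rfloor\}$ eventually sits inside $[-a_n,a_n]$ so that the clean estimate \eqref{eq:piresult} applies on the whole bulk (this holds because $a_n/\sigma_n\to\infty$). One small subtlety worth stating carefully: the multiplicative errors $e^{\pm\epsilon_n}$ and the truncation errors must be shown to vanish \emph{after} dividing by the normalization; since $\sum_{k=-b_n}^{b_n}\pi_n(k)=1$ exactly, the cleanest route is to avoid renormalizing at all and use the already-normalized bound \eqref{eq:piresult} directly, treating the $|k|>a_n$ mass purely as an error term bounded by the second display of the Proposition. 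No separate moment or characteristic-function argument is needed — the pointwise CDF convergence suffices.
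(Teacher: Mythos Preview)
Your proposal is correct and follows essentially the same route as the paper: fix $x$, note that eventually $|x|\sigma_n\le a_n$, split $\mathbb{P}(W_\infty^{(n)}\le x\sigma_n)$ into the bulk $\sum_{k=-a_n}^{\lfloor x\sigma_n\rfloor}\pi_n(k)$ and the tail over $a_n<|k|\le b_n$, kill the tail using the second bound of Proposition~\ref{prop:statdist}, and treat the bulk via \eqref{eq:piresult} as $(1+o(1))\cdot\frac{1}{\sqrt{2\pi}\sigma_n}\sum e^{-k^2/(2\sigma_n^2)}$, a Riemann sum converging to $\Phi(x)$. The paper's proof is just a slightly terser version of exactly this argument.
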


\begin{proof}
	This is a direct consequence of Proposition \ref{prop:statdist}. Fix $x\in \mathbb{R}$. For $n$ big enough we have that $|x|\sigma_n\leq a_n$. Therefore
	\begin{align}
	\sum_{k=-a_n}^{\lfloor x\sigma_n\rfloor}\pi_n(k)\leq \mathbb{P}(W_\infty^{(n)}\leq x\sigma_n) \leq \sum_{k=-a_n}^{\lfloor x\sigma_n\rfloor}\pi_n(k) +\sum_{a_n<|k|\leq b_n}\pi_n(k). 
	\end{align}
	Since $a_n/\sigma_n^2\to 0$, $\sigma_n\to +\infty$ and $a_n^2/\sigma_n^2\to +\infty$, we have that
	\begin{align}
	\sum_{k=-a_n}^{\lfloor x\sigma_n\rfloor}\pi_n(k) = \frac{1+o(1)}{\sqrt{2\pi}\sigma_n}\sum_{k=-a_n}^{\lfloor x\sigma_n\rfloor}e^{-\frac12k^2/\sigma_n^2}\to\mathbb{P}(Z\leq x),\qquad \sum_{a_n<|k|\leq b_n}\pi_n(k)\to 0,
	\end{align}
	proving that $\mathbb{P}(W_\infty^{(n)}/\sigma_n\leq x) \to \mathbb{P}(Z\leq x).$
\end{proof}

\section{Mixing of the random walk}\label{sec:mixing}

In this section, our aim is to give conditions on the rates under which the walk $W_t^{(n)}$ restricted to $\{-a_n,\ldots,a_n\}$ (i.e. with $a_n=b_n$) converges quickly to its stationary distribution. The idea will be to look at the distance between the two independent walkers and to show that they will meet quickly. In this setting, the speed of the walkers is the relevant measure of drift. The difference between the walkers will again be a random walk, which is restricted to the positive integers. In the remainder of this section, we omit the dependence on $n$ and simply write $W_t$ for $W_t^{(n)}$, and $\mu_k$ and $\lambda_k$ for the rates.

 Consider two independent realizations of $W_t$ in different positions $k<l$. The distance between the walkers then increases at rate $\lambda_{l}+\mu_{k}$ and decreases at rate $\lambda_{k}+\mu_{l}$. This means that the speed of the difference is given by
\begin{align}\label{eq:driftdiff}
(\lambda_{l}+\mu_{k})-(\lambda_{k}+\mu_{l}) = (\lambda_{l}-\mu_{l})-(\lambda_{k}-\mu_{k}), \end{align}
which is equal to the difference of the speeds of the two walkers. Note that the signs of the individual speeds are not that relevant. For instance, the walker on the right could have a positive speed and still meet quickly if the walker on the left has a higher positive speed. In particular, if the speed difference in \eqref{eq:driftdiff} is negative and proportional to the distance $l-k$, then Lemma \ref{lem:linear_drift} guarantees that the walkers will meet quickly. This explains the condition on the derivative of the speed in the next lemma.

\begin{lemma}\label{lem:meeting}
	Let $W_t^1$ and $W_t^2$ be two independent realizations of the random walk $W_t$. Let $W_0^1 = -a_n$, $W_0^2 = a_n$ and let $T = \inf\{t:W_t^1=W_t^2\}$. Assume that there exists a constant $d>0$ and a differentiable function $h:\mathbb{R}\to \mathbb{R}$ such that $\lambda_k-\mu_k = h(k)$ for all $k\in \{-a_n,\ldots,a_n\}$ and
	\begin{align}
	\frac{d}{dx}h(x) \leq -d \qquad\text{for}\qquad x\in [-a_n,a_n].
	\end{align}
	Then for all $t\geq 0$ 
	\begin{align}
	\mathbb{P}(T>t) \leq 2a_n\cdot e^{-d\cdot t}.
	\end{align}
\end{lemma}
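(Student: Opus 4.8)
The idea is exactly the one sketched before the statement: track the distance between the two walkers, show it decays in expectation at exponential rate $d$, and then read off the meeting-time tail from Markov's inequality.

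First I would set up the coupling cleanly. Run $W^1_t$ and $W^2_t$ with independent dynamics, but absorb the pair on the diagonal: once they are equal, freeze them together. Let $D_t = W^2_t - W^1_t$ for this absorbed pair; then $D_0 = 2a_n$ and $D_t \ge 0$ for all $t$. Since each walker jumps by $\pm 1$ on $\mathbb{Z}$ and, being independent, the two a.s. never jump simultaneously, the ordering $W^1 < W^2$ cannot be reversed without first passing through equality; hence on $\{T > t\}$ we have $W^1_t < W^2_t$, i.e. $D_t \ge 1$, while $D_t = 0$ on $\{T \le t\}$. Consequently $\mathbb{P}(T > t) = \mathbb{P}(D_t \ge 1) \le \mathbb{E}[D_t]$, and it remains to bound $\mathbb{E}[D_t]$.

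Next I would compute the generator $\mathcal{L}$ of the (finite-state) coupled chain applied to the function $D(k,l) = l-k$. For $k<l$ the distance increases by $1$ at rate $\lambda_l + \mu_k$ and decreases by $1$ at rate $\lambda_k + \mu_l$ (this is precisely \eqref{eq:driftdiff}), so
\[
\mathcal{L}D(k,l) = (\lambda_l + \mu_k) - (\lambda_k + \mu_l) = (\lambda_l - \mu_l) - (\lambda_k - \mu_k) = h(l) - h(k),
\]
which also holds trivially on the absorbing diagonal (both sides are $0$, and at the boundary states $\lambda_{a_n}=\mu_{-a_n}=0$ is automatically incorporated). Using the hypothesis $h'\le -d$ on $[-a_n,a_n]$,
\[
\mathcal{L}D(k,l) = \int_k^l h'(x)\,dx \le -d\,(l-k) = -d\,D(k,l)
\]
for $k\le l$, so $\mathcal{L}D \le -dD$ on the whole state space. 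Since the state space $\{-a_n,\dots,a_n\}^2$ is finite, $t\mapsto\mathbb{E}[D_t]$ is differentiable with $\tfrac{d}{dt}\mathbb{E}[D_t] = \mathbb{E}[\mathcal{L}D(W^1_t,W^2_t)] \le -d\,\mathbb{E}[D_t]$ (the same identity that underlies Lemma \ref{lem:mixing_differential}, now applied to the coupled chain). Grönwall's inequality gives $\mathbb{E}[D_t] \le \mathbb{E}[D_0] e^{-dt} = 2a_n e^{-dt}$, and combined with $\mathbb{P}(T>t)\le\mathbb{E}[D_t]$ this is the claim.

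The one point that needs care — the would-be obstacle — is that $D_t$ is \emph{not} a birth–death process on its own: its jump rates depend on the actual positions $k,l$, not only on the gap $l-k$, so Lemma \ref{lem:linear_drift} cannot be quoted directly. Instead one reruns its short argument using the pointwise generator bound $\mathcal{L}D\le -dD$, which is exactly what the differentiable-$h$ hypothesis is designed to deliver via the fundamental theorem of calculus. The remaining ingredients — no crossing without meeting, the forward equation for a finite chain, and Markov's inequality — are routine.
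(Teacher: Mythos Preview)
Your argument is correct. Both proofs hinge on the same computation $h(l)-h(k)=\int_k^l h'\le -d(l-k)$ and the same Markov-inequality endgame, but they handle the non-Markovianity of the gap $W^2_t-W^1_t$ differently. The paper resolves this by introducing a third walker $P^3_t\ge P^2_t$ whose right-rate is engineered so that $P^3_t-P^1_t$ \emph{is} a one-dimensional birth--death process with speed exactly $-d\cdot(\text{gap})$, allowing Lemma~\ref{lem:linear_drift} to be invoked verbatim. You instead stay with the two-dimensional coupled chain, note that the generator applied to the distance function satisfies the pointwise inequality $\mathcal{L}D\le -dD$, and rerun the Gr\"onwall/Markov argument at that level. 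Your route is shorter and avoids the auxiliary construction; the paper's route is more modular in that it reduces cleanly to an earlier lemma. (Minor quibble: the walkers live on $\{-a_n,\dots,a_n\}$, not $\mathbb{Z}$, but this does not affect your no-crossing observation.)
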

\begin{proof}
We introduce the two-dimensional process $\{P_t,t\geq 0\}$ describing the pair of walkers and couple them as soon as they meet:
\begin{align}\label{eq:Pdef}
P_t = \left(P_t^1,P_t^2\right) = \begin{cases}
\big(W_t^1,W_t^2\big) & t<T,\\
\big(W_t^1,W_t^1\big) & t\geq T.
\end{cases}
\end{align}
When $t<T$, the process $P_t$ is in some state $(k,l)$ with $k<l$. The transition rates then are
\begin{align}\label{eq:Prates}
\begin{array}{lll}
(k,l) \to (k-1,l) \text{\ with rate\ } \mu_k, && (k,l) \to (k,l-1) \text{\ with rate\ } \mu_l, \\
(k,l) \to (k+1,l) \text{\ with rate\ } \lambda_k, & & (k,l) \to (k,l+1) \text{\ with rate\ } \lambda_l. 
\end{array}
\end{align}
The distance $l-k$ between the walkers increases at rate $\mu_k+\lambda_l$ and decreases at rate $\lambda_k+\mu_l$. We aim to apply Lemma \ref{lem:linear_drift} to the distance, and therefore look at the difference of these rates:
\begin{align}
(\lambda_l+\mu_k)-(\lambda_k+\mu_l) &= (\lambda_l-\mu_l)-(\lambda_k-\mu_k) \nonumber\\
& = f(l)-f(k) = \int_{k}^l \frac{d}{dx}f(x)dx \leq -d\cdot (l-k).
\end{align}
Unfortunately, the left hand side is not necessarily determined by $l-k$, so the distance $P_t^2-P_t^1$ is not a Markov process. We therefore add a third walker $\{P_t^3,t\geq 0\}$ with $P_0^3=a_n$ and satisfying $P_t^1\leq P_t^2\leq P_t^3$ for all $t\geq 0$. We will define its rates such that the distance $D_t:=P_t^3-P_t^1$ is a Markov process (but $P_t^3$ itself is not Markovian).

The rate to the right of the new walker $P_t^3$ will depend on the position of the walker $P_t^1$. If $P_t^1 = k$ and $P_t^3 = m$, then its rate to the right is chosen to be
\begin{align}
\nu_{k,m} := -d\cdot (m-k)+\lambda_k+\mu_m-\mu_k\geq \lambda_m. 
\end{align}
Suppose $(P_t^1,P_t^2,P_t^3) = (k,l,m)$ with $l<m$. The transitions in the first and second coordinate still follow the rules given in \eqref{eq:Pdef} and \eqref{eq:Prates}. The third coordinate increases and decreases at the following rates:
\begin{align}
\begin{array}{lll}
(k,l,m) \to (k,l,m+1)\phantom{ruimte} &&\text{\ with rate\ } \nu_{k,m},\\
(k,l,m) \to (k,l,m-1) &&\text{\ with rate\ } \mu_m.
\end{array}
\end{align}
If $P_t^2=P_t^3$, they can either move simultaneously, or only $P_t^3$ moves to the right. I.e., if $(P_t^1,P_t^2,P_t^3) = (k,l,l)$ with $k< l$, the possible transitions are
\begin{align}
\begin{array}{lll}
(k,l,l) \to (k,l-1,l-1)\phantom{ruimte} && \text{\ with rate\ } \mu_l\\
(k,l,l) \to (k,l+1,l+1) && \text{\ with rate\ } \lambda_l\\
(k,l,l) \to (k,l,l+1) &&\text{\ with rate\ } \nu_{k,l}-\lambda_l
\end{array}
\end{align}
Note that in any state $(P_t^1,P_t^2,P_t^3) = (k,l,m)$, the marginal rates of the first two walkers are as before. Define 
\begin{align}
\tilde T = \inf\{t:P_t^1 = P_t^3\}
\end{align}
and set $P_t = (W_t^1,W_t^1,W_t^1)$ for $t\geq \tilde T$. For $t<\tilde T$ and $(P_t^1,P_t^2,P_t^3) = (k,l,m)$, the distance $D_t=m-k$ decreases at rate $\lambda_k+\mu_m$ and increases at rate $\mu_k+\nu_{k,m}$. Hence, the speed of the difference is given by
\begin{align}
\mu_k+\nu_{k,m}-(\lambda_k+\mu_m) &= -d\cdot (m-k).
\end{align}
This means that $D_t$ behaves as a birth death process on $[2a_n]$, with negative linear speed and we can apply Lemma \ref{lem:linear_drift}. We obtain
\begin{align}
\mathbb{P}(\tilde{T}>t)\leq2a_n\cdot e^{-d\cdot t},
\end{align}
for all $t>0$. Since $\tilde{T}\geq T$, it follows that $\mathbb{P}(T>t)\leq \mathbb{P}(\tilde{T}>t)$ and this completes the proof.
\end{proof}

The previous result will be used to prove quick convergence of $W_t$ to the stationary distribution for any initial distribution $W_0$. The idea is the following: if two walkers meet, they will obey the same probability distribution after meeting. If a third walker starts in between and stays in between through coupling, they will all three meet and share the same distribution. Letting the  third walker start in the stationary distribution, the common distribution after meeting has to be the stationary distribution as well. 

\begin{proposition}\label{prop:mixing} Consider the walk $\{W_t,t\geq 0\}$ with arbitrary initial distribution $W_0$ and conditions on the rates as in Lemma \ref{lem:meeting}. Let $W_\infty$ have the stationary distribution. For all $t\geq 0$ and all $-a_n\leq k\leq a_n$, 
	\begin{align}
	\left\lvert \mathbb{P}(W_t\leq k) - \mathbb{P}(W_\infty\leq k)\right\rvert \leq 2a_ne^{-d\cdot t}.
	\end{align}
\end{proposition}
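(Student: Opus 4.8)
The plan is to make rigorous the coupling heuristic described in the paragraph preceding the proposition: run three coupled copies of the walk, one started from $W_0$ (the arbitrary initial distribution), one from the stationary distribution $W_\infty$, and a third squeezed in between, then use Lemma \ref{lem:meeting} to control the time until they all coincide. First I would set up the coupling precisely. Let $W_t^1$ be a copy of the walk with $W_0^1\sim W_0$, and let $W_t^2$ be a copy with $W_0^2\sim W_\infty$ (so $W_t^2\sim W_\infty$ for all $t$, by stationarity). I would couple them with a monotone (order-preserving) coupling of the two marginal chains on $\{-a_n,\dots,a_n\}$, so that the pointwise maximum $M_t=\max(W_t^1,W_t^2)$ and minimum $m_t=\min(W_t^1,W_t^2)$ are themselves realizations of the walk, and $m_t\le M_t$ for all $t$. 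This is possible because a birth–death chain on an interval admits a monotone coupling. Then I would apply Lemma \ref{lem:meeting} — whose hypotheses on the rates are exactly those assumed here — to the pair $(m_t, M_t)$: since $m_0\ge -a_n$ and $M_0\le a_n$, the meeting time $T=\inf\{t: m_t=M_t\}=\inf\{t:W_t^1=W_t^2\}$ is stochastically dominated by the meeting time of two walkers started at $-a_n$ and $a_n$, so $\mathbb{P}(T>t)\le 2a_n e^{-dt}$.

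Next I would extract the distributional bound. On the event $\{T\le t\}$ we have $W_t^1=W_t^2$, hence for every $k$,
\begin{align}
\left|\mathbb{P}(W_t^1\le k)-\mathbb{P}(W_t^2\le k)\right|
&= \left|\mathbb{E}\!\left[\mathbbm{1}\{W_t^1\le k\}-\mathbbm{1}\{W_t^2\le k\}\right]\right| \nonumber\\
&\le \mathbb{P}\!\left(\{W_t^1\le k\}\triangle\{W_t^2\le k\}\right) \le \mathbb{P}(T>t).
\end{align}
Since $W_t^1\sim W_t$ (the walk with initial law $W_0$) and $W_t^2\sim W_\infty$, this gives
\begin{align}
\left|\mathbb{P}(W_t\le k)-\mathbb{P}(W_\infty\le k)\right|\le \mathbb{P}(T>t)\le 2a_n e^{-dt},
\end{align}
which is the claim.

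The main obstacle — and the only point that needs genuine care rather than bookkeeping — is justifying the monotone coupling so that Lemma \ref{lem:meeting} can actually be invoked. Lemma \ref{lem:meeting} is stated for two \emph{independent} realizations started at the two endpoints, and its proof builds an auxiliary sandwiching walker; to use it as a black box for the pair $(m_t,M_t)$ I need the ordered pair to have the right joint law, namely each of $m_t$ and $M_t$ is a copy of $W_t$ and they start from (or below/above) $\mp a_n$. One clean way around any subtlety is to avoid the monotone coupling of $W^1,W^2$ altogether: instead take $W_t^1$ and $W_t^2$ independent, started from $W_0$ and $W_\infty$ respectively, plus a third walker $W_t^3$ started from $W_\infty$ and coupled to stay between $\min(W_t^1,W_t^2)$ and $\max(W_t^1,W_t^2)$ exactly as the auxiliary walker in the proof of Lemma \ref{lem:meeting} is constructed — this is precisely the "third walker in between" of the heuristic. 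Once $W^1$ and $W^3$ meet (controlled by Lemma \ref{lem:meeting} via the distance process, which has negative linear speed), the sandwich forces $W^2$ to have met them too, and since $W^3\sim W_\infty$ always, the post-meeting common law is the stationary one. Either route gives the bound; I would present the monotone-coupling version if a monotone coupling of birth–death chains is already available in the paper, and otherwise spell out the three-walker sandwich, reusing the construction from Lemma \ref{lem:meeting} verbatim.
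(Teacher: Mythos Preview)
Your primary route is correct and is essentially the paper's argument. The paper makes your ``stochastic domination'' step explicit by introducing four walkers from the outset: two extremal copies $W^1\equiv -a_n$ and $W^2\equiv a_n$, plus $\tilde W\sim W_0$ and $W^S\sim W_\infty$, all coupled monotonically so that $W^1_t\le \tilde W_t,\,W^S_t\le W^2_t$ for all $t$; Lemma~\ref{lem:meeting} then applies directly to the outer pair $(W^1,W^2)$, and when they meet all four coincide. Your $(m_t,M_t)$ construction is a harmless detour --- the two walkers of interest are already sandwiched between the extremal ones, and the final distributional bound follows exactly as in your display.

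Your backup ``three-walker sandwich'', however, is confused and would not work as written. The auxiliary walker $P^3_t$ in the proof of Lemma~\ref{lem:meeting} is not squeezed between the other two; it sits \emph{above} both ($P^1\le P^2\le P^3$) and has modified, position-dependent rates $\nu_{k,m}$ chosen purely so that $P^3-P^1$ becomes Markov. In particular $P^3$ is not a copy of the original walk, so the claim ``$W^3\sim W_\infty$ always'' fails, and with it the conclusion that the post-meeting law is stationary. The correct sandwich is the paper's: genuine copies of the walk at the extremes, with the arbitrary-start and stationary-start copies trapped in between, all with the true rates; the auxiliary $P^3$ is a technical device that lives only inside the proof of Lemma~\ref{lem:meeting} and should not be promoted to the level of the coupling argument here.
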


\begin{proof}
We consider four independent realizations of $\{W_t,t\geq 0\}$ with different initial distributions. 
\begin{itemize}
\item
Let $W_t^S$ be a realization of $W_t$ which starts in the stationary distribution, i.e. $\mathbb{P}(W^S_0 = k) = \mathbb{P}(W_\infty = k)$. By definition of stationarity, this means that 
\begin{align}
\mathbb{P}(W_t^S= k) = \mathbb{P}(W_\infty = k),
\end{align}
for all $t\geq 0$ and all $k$.
\item Let $\tilde W_t$ be a realization with arbitrary initial distribution $\tilde W_0$ on $\{-a_n,\ldots,a_n\}$.
\item 
 Let $W_t^1$ and $W_t^2$ be two more realizations with deterministic initial distributions $W_0^1\equiv -a_n$ and $W_0^2\equiv a_n$. 
\end{itemize}
At time $t=0$ we have $W_0^1\leq W_0^S\leq W_0^2$ and $W_0^1\leq \tilde W_0\leq W_0^2$. If any pair of walkers meets, we couple them. This implies that 
\begin{align}
W_t^1\leq W_t^S \leq W_t^2\qquad \text{and}\qquad W_t^1\leq \tilde W_t \leq W_t^2
\end{align}
for all $t\geq 0$. Lemma \ref{lem:meeting} gives us that $\mathbb{P}(T>t)\leq 2a_ne^{-d\cdot t}$, where $T = \inf\{t:W_t^1=W_t^2\}$ is the first meeting time of $W_t^1$ and $W_t^2$. It follows that 
\begin{align}
W_t^1=W_t^S= \tilde W_t=W_t^2
\end{align}
for all $t\geq T$. Observe that
\begin{align}
\mathbb{P}(\tilde W_t\leq k) &= \mathbb{P}(\tilde W_t\leq k\mid T\leq t)\cdot \mathbb{P}(T\leq t) + \mathbb{P}(\tilde W_t\leq k\mid T> t)\cdot \mathbb{P}(T> t)\nonumber\\
&\geq  \mathbb{P}(W_t^S\leq k\mid T\leq t)\cdot \mathbb{P}(T\leq t) \nonumber \\
&\geq (1-2a_ne^{-d\cdot t})\cdot \mathbb{P}(W_\infty\leq k)\nonumber\\ &\geq \mathbb{P}(W_\infty\leq k) - 2a_ne^{-d\cdot t}.
\end{align}
Similarly,
\begin{align}
\mathbb{P}(\tilde W_t\leq k) &= \mathbb{P}(\tilde W_t\leq k\mid T\leq t)\cdot \mathbb{P}(T\leq t) + \mathbb{P}(\tilde W_t\leq k\mid T> t)\cdot \mathbb{P}(T> t)\nonumber\\
&\leq  \mathbb{P}(W_t^S\leq k\mid T\leq t)\cdot \mathbb{P}(T\leq t) +  \mathbb{P}(T> t) \nonumber\\
&\leq  \mathbb{P}(W_\infty\leq k) + 2a_ne^{-d\cdot t},
\end{align}
which is sufficient to complete the proof.
\end{proof}

\section{Main result for walks on $\mathbb{Z}$}\label{sec:Zwalks}

So far, we investigated the random walks $W_t^{(n)}$ on a finite interval in $\mathbb{Z}$. The main conclusions are that these walks quickly reach their stationary distribution (Proposition \ref{prop:mixing}) and that these stationary distributions with a proper scaling converge to a Gaussian for $n\to\infty$ (Theorem \ref{thm:statdistr}). Observe that for $n$ fixed we obtain a fixed stationary distribution, i.e.\! we can send $t$ to $\infty$ without scaling the position by a function of time. In that sense, our convergence is of a different nature than for instance the central limit theorem for simple symmetric random walk.

We now aim to expand the state space of the random walk to $\mathbb{Z}$. This walk on $\mathbb{Z}$ will be denoted $M_t^{(n)}$. We take the rates inside $[-a_n,a_n]$ as before, but we allow the walk to continue beyond this interval as a simple random walk with arbitrary rates. If we start in $-a_n<k<a_n$, the walk on $\mathbb{Z}$ is indistinguishable from the walk restricted to $[-a_n,a_n]$, as long as we do not hit the boundaries. The typical phenomenon is that the walk still quickly starts to behave like a Gaussian, until (after a long time) the boundary is hit. To formalize and prove this, we first state the next result, which bounds the hitting time of the boundary in terms of the short term drift. For convenience we assume that $a_n$ is even, and write $M_t$ for $M_t^{(n)}$. 

\begin{proposition}\label{prop:hitboundary} 
	Consider the walk $\{M_t,t\geq0\}$. Suppose the initial distribution $M_0$ is supported on $[-\tfrac{a_n}{2},\tfrac{a_n}{2}]$. Let $T_n = \inf\{t\geq 0:|M_t|=a_n\}$ and let
	\begin{align}
	\rho_n=\max\left\{\left(\frac{\lambda_k}{\mu_k}\right)^{\text{sgn}(k)} \Bigm| \frac{a_n}{2}\leq |k|\leq a_n\right\},\quad \kappa_n = \max\left\{\lambda_k+\mu_k\Bigm| \frac{a_n}{2}\leq |k|\leq a_n\right\}.
	\end{align}
	Suppose $\rho_n<1$. Then, for $\varepsilon>0$ arbitrary, there exist $C_1, C_2 >0$ such that
	\begin{align}
	\mathbb{P}(T_n<t) \leq (2+\varepsilon)\kappa_nt\rho_n^{a_n/2-1},
	\end{align}
	whenever $\kappa_nt\geq C_1+C_2\cdot a_n\log(\rho_n^{-1})$.
\end{proposition}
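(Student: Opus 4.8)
The plan is to reduce the two-sided exit problem to two one-sided birth–death extinction problems and apply Proposition \ref{prop:extinctioncontinuous}. The walk $M_t$ starts in $[-\tfrac{a_n}{2},\tfrac{a_n}{2}]$, and to reach $|M_t|=a_n$ it must first reach $\tfrac{a_n}{2}$ or $-\tfrac{a_n}{2}$; by the union bound it suffices to bound separately the time to go from $\tfrac{a_n}{2}$ to $a_n$ and from $-\tfrac{a_n}{2}$ to $-a_n$. Fix the right side (the left is symmetric after reflecting $k\mapsto -k$, which turns the condition $\lambda_k/\mu_k<1$ into $\mu_{-k}/\lambda_{-k}<1$, i.e. the same $\rho_n$). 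Here $\lambda_k/\mu_k\le\rho_n<1$ on $\{\tfrac{a_n}{2},\dots,a_n\}$, so on this block the walk has negative short-term drift, i.e. it is pushed \emph{back towards} $\tfrac{a_n}{2}$ — exactly the ``slow escape'' situation that Proposition \ref{prop:extinctioncontinuous} quantifies.

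\textbf{Reduction to a birth–death process.} First I would define a birth–death process on $[m]$ with $m=\tfrac{a_n}{2}$ by shifting coordinates: let $Y_t = a_n - M_t$ whenever $M_t\in[\tfrac{a_n}{2},a_n]$, started at $Y_0 = a_n - \tfrac{a_n}{2} = \tfrac{a_n}{2}=m$ (using a coupling/monotonicity argument to assume the worst starting point $M_0\equiv\tfrac{a_n}{2}$, as in Lemma \ref{lem:linear_drift}). Reaching $M_t=a_n$ corresponds to $Y_t$ hitting $0$. The birth rate of $Y$ at state $j$ (i.e.\ $M$ moving left from $a_n-j$) is $\mu_{a_n-j}$ and the death rate is $\lambda_{a_n-j}$; since $\lambda_{a_n-j}/\mu_{a_n-j}\le\rho_n<1$, the ``death/birth'' ratio $\lambda_{a_n-j}/\mu_{a_n-j}$ is exactly what plays the role of $\mu_k^{(n)}/\lambda_k^{(n)}$ in Proposition \ref{prop:extinctioncontinuous}, so that process has $\rho$-parameter equal to $\rho_n$ and $\kappa$-parameter at most $\kappa_n = \max\{\lambda_k+\mu_k : \tfrac{a_n}{2}\le|k|\le a_n\}$. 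One subtlety: outside $[-a_n,a_n]$ the walk $M_t$ continues with arbitrary rates, but we only need the hitting time of $a_n$, so we may simply stop $Y$ at $0$; and the block $\{\tfrac{a_n}{2},\dots,a_n\}$ already carries the absorbing-at-$\tfrac{a_n}{2}$ boundary for our purposes because hitting $\tfrac{a_n}{2}$ from above only makes escape harder — but in fact we do not even need that: we just need the time for $M$ started at $\tfrac{a_n}{2}$ to reach $a_n$ to stochastically dominate the extinction time of the $m$-state birth–death process, which again follows from a coupling (reflecting excursions below $\tfrac{a_n}{2}$ only delays the escape).

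\textbf{Applying the estimate and assembling.} With $m=\tfrac{a_n}{2}$, Proposition \ref{prop:extinctioncontinuous} applied with the same $\varepsilon$ gives constants $C_1',C_2'$ (depending only on $\varepsilon$) such that whenever $\kappa_n t \ge C_1' + C_2' m\log(\rho_n^{-1}) = C_1' + \tfrac{C_2'}{2}a_n\log(\rho_n^{-1})$, the time $T_n^{\text{right}}$ to reach $a_n$ from $\tfrac{a_n}{2}$ satisfies $\mathbb{P}(T_n^{\text{right}}<t)\le (1+\varepsilon)\kappa_n t\,\rho_n^{m-1} = (1+\varepsilon)\kappa_n t\,\rho_n^{a_n/2-1}$. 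The identical bound holds for the left side. Taking $C_1 = C_1'$, $C_2 = C_2'/2$ and adding the two probabilities yields $\mathbb{P}(T_n<t)\le \mathbb{P}(T_n^{\text{right}}<t)+\mathbb{P}(T_n^{\text{left}}<t)\le (2+2\varepsilon)\kappa_n t\,\rho_n^{a_n/2-1}$; relabelling $\varepsilon$ (replace it by $\varepsilon/2$ at the start) gives the stated $(2+\varepsilon)$.

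\textbf{Main obstacle.} The genuinely delicate point is the coupling/monotonicity argument that lets us (i) replace the arbitrary initial distribution on $[-\tfrac{a_n}{2},\tfrac{a_n}{2}]$ by the deterministic worst case at the nearer boundary, and (ii) dominate the escape time of $M_t$ (which, below $\tfrac{a_n}{2}$, may wander across $0$ and do anything) by the clean extinction time of a birth–death process confined to a block of length $\tfrac{a_n}{2}$. The clean way is to track only the maximal position reached and argue that excursions of $M$ into $[-\tfrac{a_n}{2},\tfrac{a_n}{2})$ can only postpone hitting $a_n$, so that $T_n$ started anywhere in $[-\tfrac{a_n}{2},\tfrac{a_n}{2}]$ stochastically dominates the first-passage time from $\tfrac{a_n}{2}$ to $a_n$ of the walk restricted to $\{\tfrac{a_n}{2},\dots,a_n\}$ with a reflecting (or even absorbing-and-restarted) boundary at $\tfrac{a_n}{2}$ — and the latter is exactly the birth–death extinction time handled by Proposition \ref{prop:extinctioncontinuous}. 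Everything else is bookkeeping with the constants $C_1,C_2$ and the factor of $2$ from the union bound over the two boundaries.
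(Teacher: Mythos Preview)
Your proposal is correct and follows essentially the same route as the paper: split $T_n=\min\{T_n^-,T_n^+\}$, couple each one-sided escape to a birth--death process on $[a_n/2]$ with $\rho$-parameter $\rho_n$ and $\kappa$-parameter $\kappa_n$, apply Proposition~\ref{prop:extinctioncontinuous} to each, and add the two bounds. The only cosmetic difference is that the paper packages your two reduction steps (worst-case start at $\pm a_n/2$, plus ``excursions into $(-a_n/2,a_n/2)$ only delay escape'') into a single coupling inequality $Y_t^{+}\le\min\{a_n-M_t,\,a_n/2\}$ (and symmetrically for $Y_t^{-}$), which directly gives that the extinction time of $Y^{\pm}$ is a lower bound for $T_n^{\pm}$; the content and constants are identical.
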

  
\begin{proof}
Let $T_n^- = \inf\{t\geq 0:M_t=-a_n\}$ and $T_n^+ = \inf\{t\geq 0:M_t=a_n\}$. Then $T_n = \min\{T_n^-,T_n^+\}$. 

It is straightforward to define a birth-death process $Y^-_t$ and couple it to $M_t$ such that $Y^-_0=a_n/2$ and $Y^-_t\leq \min\{M_t+a_n,a_n/2\}$ for all $t$. So $Y_t^-$ is always less than the distance of $M_t$ to the left boundary of $[-a_n,a_n]$. Consequently, the extinction time of $Y_t^-$ is less than or equal to $T_n^-$. Similarly, we can construct a birth-death process $Y_t^+$ for which $Y_0^+=a_n/2$ and $Y_t^+\leq \min\{a_n-M_t,a_n/2\}$ for all $t$. The process $Y_t^+$ has extinction time less than or equal to $T_n^+$.

We bound the extinction times of $Y_t^-$ and $Y_t^+$ by Proposition \ref{prop:extinctioncontinuous}.  It then follows that for any $\varepsilon>0$, there exist $C_1,C_2>0$ such that
\begin{align}
\mathbb{P}\left(T_n^-<t\right)\leq (1+\varepsilon)\kappa_nt\rho^{a_n/2-1}\quad\text{and}\quad \mathbb{P}\left(T_n^+<t\right)\leq (1+\varepsilon)\kappa_nt\rho^{a_n/2-1},
\end{align}
whenever $\kappa_n t\geq C_1+C_2 a_n\log(\rho_n^{-1})$.
This implies the result of the proposition. 
\end{proof}

We collect and summarize the results and conditions on the random walks $\{M_t^{(n)},t\geq 0\}$. For each $n$, $M_t^{(n)}$ is a simple random walk with state space $\mathbb{Z}$. The rates in state $k$ are denoted $\mu_k^{(n)}$ (to the left) and $\lambda_k^{(n)}$ (to the right). 

Take $\sigma_n\to\infty$ and let $a_n \in (\sigma_n,\sigma_n^2)$ be an integer sequence satisfying \eqref{eq:defbn}. We assume existence of $n_0\in\mathbb{N}$ such that the rates satisfy the following conditions for $n\geq n_0$. These conditions only restrict the rates for $|k|\leq a_n$, and are sufficient for our main theorem.
\begin{enumerate}
	\item (Condition for quasi-stationary behaviour to be Gaussian.) Rates satisfy
\begin{align}\label{eq:cond1}
\log\left(\frac{\lambda_k^{(n)}}{\mu_{k+1}^{(n)}}\right) = -\frac{k}{\sigma_n^2}+\delta_n(k),\qquad -a_n\leq k \leq a_n-1,
\end{align}
(or the version in \eqref{eq:rates2}) with $\sum_k|\delta_n(k)|\leq Ka_n/\sigma_n^2$ for some constant $K$.

\paragraph{Remark.} This condition says that there is a long term drift towards 0 which is approximately linear in $k$. A condition of this type is needed for quasi-stationary behaviour to be Gaussian, since the long term drift determines the quasi-stationary distribution.
\item (Condition for quick convergence to quasi-stationary behaviour.) For each $n$, there exists a constant $d_n>0$ and a differentiable function $h_n:\mathbb{R}\to\mathbb{R}$ such that $\lambda_k^{(n)}-\mu_k^{(n)}=h_n(k)$ for all $-a_n\leq k\leq a_n$ and
\begin{align}\label{eq:cond2}
\frac{d}{dx}h_n(x)\leq -d_n,\qquad x\in [-a_n,a_n].
\end{align}

\paragraph{Remark.}  This is a condition on the speed of the process. From condition \eqref{eq:cond1}, it does not yet follow that there is a speed towards 0. See also the discussion Section \ref{sec:drift}. A speed towards 0 is necessary to control the time to reach 0. This speed is allowed to go to $0$ as $n\to\infty$. However, we only get a meaningful result if the speed does not vanish too quickly, see the theorem below.

\item (Condition for long persistence of quasi-stationary behaviour.) For some $\varepsilon>0$, independent of $n$, the constant
\begin{align}\label{eq:constants1}
\rho_n = \max_{\frac{a_n}{2}\leq |k|\leq a_n} \left(\frac{\lambda_k^{(n)}}{\mu_k^{(n)}}\right)^{\text{sgn}(k)}
\end{align}
satisfies $e^{-1}<\rho_n<1-\frac{(2+\varepsilon)\log(a_n)}{a_n}$. 

\paragraph{Remark.}  Condition \eqref{eq:cond2} on the speed already implies that there is a short term drift towards 0, i.e. that $\rho_n<1$. If the rates are not too wild, it is natural that $\rho_n$ goes to $1$ as $n\to\infty$. The lower bound on $\rho_n$ is therefore not really restrictive, but is there for technical reasons. The upper bound ensures that the short term drift is strong enough. Essentially the short term drift says how likely it is to move away from 0, and therefore determines the time spent in the quasi-stationary distribution. The condition guarantees that $\rho_n^{-a_n/2}$ appearing in the theorem below is at least $a_n^{(2+\varepsilon)/2}$ so that the length of $I_n$ will at least grow like a positive power of $a_n$. 
\end{enumerate}
We further introduce the constant 
\begin{align}\label{eq:constants2}
\kappa_n = \max_{\frac{a_n}{2}\leq |k|\leq a_n} (\lambda_k^{(n)}+\mu_k^{(n)}).
\end{align}
Observe that 
\begin{align}\label{eq:observation_kappa}
2\kappa_n &\geq |h_n(a_n)|+|h_n(-a_{n})|\geq h_n(-a_n)-h_n(a_n)\geq 2d_na_n.
\end{align}
Under the three conditions above, our main result is the following. 


\begin{theorem}\label{thm:main}
Let $Z\sim N(0,1)$. Let the walks $M_t^{(n)}$ satisfy the three conditions above. Suppose $M_0^{(n)}$ satisfies $\mathbb{P}(-a_n/2\leq M^{(n)}_0\leq a_n/2) =1$ for all $n$. Let $c_n$ be any sequence with $\lim_{n\to\infty}c_n=0$. Let
\begin{align}
I_n = \left\{t\geq 0:d_n^{-1}c_n^{-1}\log(a_n)\le t\le c_n\kappa_n^{-1}\rho_n^{-a_n/2}\right\}.
\end{align}
If $I_n\neq\emptyset$, then 
\begin{align}\label{eq:thmconclusion}
\lim_{n\to\infty}\sup_{t\in I_n} \left|\mathbb{P}\left(\tfrac{M_t^{(n)}}{\sigma_n}\leq x\right)-\mathbb{P}(Z\leq x)\right|=0, \qquad\text{for\ all}\ x\in\mathbb{R}.
\end{align}	
\end{theorem}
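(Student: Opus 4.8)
The plan is to combine the three main ingredients already established: Proposition~\ref{prop:hitboundary} (the boundary of $[-a_n,a_n]$ is not hit before time $\kappa_n^{-1}\rho_n^{-a_n/2}$ with probability tending to $1$), Proposition~\ref{prop:mixing} (the restricted walk on $\{-a_n,\ldots,a_n\}$ is within $2a_n e^{-d_n t}$ in Kolmogorov distance of its stationary distribution), and Theorem~\ref{thm:statdistr} (the scaled stationary distribution converges to $N(0,1)$). First I would introduce a coupling: run $M_t^{(n)}$ on $\mathbb{Z}$ together with the walk $\widetilde M_t^{(n)}$ restricted to $\{-a_n,\ldots,a_n\}$ started from the same initial configuration, driven by the same clocks, so that $M_t^{(n)} = \widetilde M_t^{(n)}$ for all $t < T_n$, where $T_n = \inf\{t : |M_t^{(n)}| = a_n\}$. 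Since the initial distribution is supported on $[-a_n/2, a_n/2]$, the conditions of both earlier propositions apply. For any $x \in \mathbb{R}$ and $t \in I_n$, write
\begin{align}
\left|\mathbb{P}\!\left(\tfrac{M_t^{(n)}}{\sigma_n}\le x\right) - \mathbb{P}(Z\le x)\right|
&\le \mathbb{P}(T_n \le t) + \left|\mathbb{P}\!\left(\tfrac{\widetilde M_t^{(n)}}{\sigma_n}\le x\right) - \mathbb{P}\!\left(\tfrac{W_\infty^{(n)}}{\sigma_n}\le x\right)\right| \nonumber\\
&\quad + \left|\mathbb{P}\!\left(\tfrac{W_\infty^{(n)}}{\sigma_n}\le x\right) - \mathbb{P}(Z\le x)\right|,
\end{align}
where $W_\infty^{(n)}$ has the stationary distribution of the restricted walk. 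The first term is handled by Proposition~\ref{prop:hitboundary}; the second by Proposition~\ref{prop:mixing} applied at the discretization points $k = \lfloor x\sigma_n\rfloor$ (and $-a_n \le \lfloor x\sigma_n\rfloor \le a_n$ for $n$ large since $a_n/\sigma_n \ge \sqrt{2\log\sigma_n}\to\infty$); the third by Theorem~\ref{thm:statdistr}, and crucially it does not depend on $t$ at all.

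The main work is then to check that each of the three bounds is uniformly small over $t \in I_n$ as $n\to\infty$. For the mixing term, Proposition~\ref{prop:mixing} gives the bound $2a_n e^{-d_n t}$, which is increasing in $d_n^{-1}$ and decreasing in $t$, so its supremum over $t\in I_n$ is attained at the left endpoint $t = d_n^{-1}c_n^{-1}\log(a_n)$; there it equals $2a_n \exp(-c_n^{-1}\log a_n) = 2 a_n^{\,1 - c_n^{-1}} \to 0$ since $c_n^{-1}\to\infty$. For the boundary term, Proposition~\ref{prop:hitboundary} gives $\mathbb{P}(T_n < t) \le (2+\varepsilon)\kappa_n t \rho_n^{a_n/2 - 1}$, which is increasing in $t$, so its supremum over $I_n$ is at the right endpoint $t = c_n\kappa_n^{-1}\rho_n^{-a_n/2}$, where it equals $(2+\varepsilon)c_n\rho_n^{-1} \le (2+\varepsilon)e c_n \to 0$, using $\rho_n > e^{-1}$ from condition~\eqref{eq:constants1}. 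One also has to confirm the applicability hypothesis of Proposition~\ref{prop:hitboundary}, namely $\kappa_n t \ge C_1 + C_2 a_n\log(\rho_n^{-1})$: for $t\in I_n$ this reduces, via $\kappa_n \ge d_n a_n$ from \eqref{eq:observation_kappa} and $\log(\rho_n^{-1}) \le 1$, to a comparison showing the left endpoint of $I_n$ already exceeds the required threshold for $n$ large (here $I_n\neq\emptyset$ plus the lower bound $\rho_n^{-a_n/2}\ge a_n^{(2+\varepsilon)/2}$ from condition~\eqref{eq:constants1} gives room). Finally the third term is $t$-independent and $\to 0$ by Theorem~\ref{thm:statdistr}.

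The hardest part, and the place requiring genuine care rather than bookkeeping, is reconciling the two opposing endpoint requirements: the mixing bound forces $t$ \emph{large} (to wash out the initial condition) while the boundary-escape bound forces $t$ \emph{small} (so the walk has not yet left the controlled region), and one must verify the window $I_n$ is both nonempty infinitely often under natural rate choices and wide enough that the applicability hypothesis of Proposition~\ref{prop:hitboundary} holds throughout. This is exactly where conditions~\eqref{eq:cond2} and~\eqref{eq:constants1} — the speed not vanishing too fast and $\rho_n$ bounded away from $1$ by $(2+\varepsilon)\log(a_n)/a_n$ — enter: they guarantee $d_n^{-1}c_n^{-1}\log(a_n) \le c_n\kappa_n^{-1}\rho_n^{-a_n/2}$ is consistent and that $\kappa_n^{-1}\rho_n^{-a_n/2}$ grows at least like a positive power of $a_n$ over $\kappa_n$. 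Once the endpoint estimates above are in place, summing the three terms and taking $\sup_{t\in I_n}$ followed by $n\to\infty$ yields~\eqref{eq:thmconclusion}; since each bound was shown to converge to $0$ uniformly in $t$, pointwise-in-$x$ convergence is immediate, and no further argument (e.g.\ Pólya's theorem) is needed because the statement is only claimed for each fixed $x$.
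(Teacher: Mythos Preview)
Your proposal is correct and follows essentially the same approach as the paper: the same coupling of $M_t^{(n)}$ with the restricted walk, the same triangle-inequality decomposition into boundary-hitting, mixing, and stationary-to-Gaussian terms, and the same evaluation of the first two bounds at the right and left endpoints of $I_n$ respectively via Propositions~\ref{prop:hitboundary} and~\ref{prop:mixing}, followed by Theorem~\ref{thm:statdistr}. The only cosmetic difference is that the paper first bounds $|\mathbb{P}(M_t^{(n)}\le k)-\mathbb{P}(W_\infty^{(n)}\le k)|$ by two terms and invokes Theorem~\ref{thm:statdistr} separately at the end, whereas you write all three terms at once; the verification of the applicability hypothesis $\kappa_n t\ge C_1+C_2 a_n\log(\rho_n^{-1})$ via $\kappa_n\ge d_n a_n$ and $\log(\rho_n^{-1})<1$ is also the same as in the paper.
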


\begin{remark} 
To maximize the length of $I_n$, take for $c_n$ a slowly decreasing function. Still, $I_n$ will be empty if $d_n$ goes to $0$ too fast. How fast $d_n$ can go to $0$ depends on the other parameters, but in any case $d_n=\Omega(\kappa_n/a_n)$ suffices to get $|I_n|\geq a_n^\delta$ for some $\delta>0$.
\end{remark}

\begin{proof}[Proof of Theorem \ref{thm:main}.]
Let $W_t^{(n)}$ be the random walk obtained by restricting $M_t^{(n)}$ to $[-a_n,a_n]$. This means that the two walks have the same rates, except in the boundary points $-a_n$ and $a_n$, where we set the outward rates of $W_t^{(n)}$ equal to 0. Let $W_\infty^{(n)}$ be the stationary distribution of $W_t^{(n)}$. Then, for $k\in\mathbb{Z}$, 
\begin{align}
\Big|\mathbb{P}(&\left.\!\!M_t^{(n)}\leq k)-\mathbb{P}(W_\infty^{(n)}\leq k)\right|\nonumber \\
 &\leq \left|\mathbb{P}(M_t^{(n)}\leq k)-\mathbb{P}(W_t^{(n)}\leq k)\right|+\left|\mathbb{P}(W_t^{(n)}\leq k)-\mathbb{P}(W_\infty^{(n)}\leq k)\right|. \label{eq:probdiff}
\end{align}
For $t$ not too large, it is unlikely that $M_t^{(n)}$ already has reached the boundary of $[-a_n,a_n]$. Therefore, the first term on the right hand side of \eqref{eq:probdiff} will be small. For $t$ not too small, the walk $W_t^{(n)}$ will be close to its stationary distribution so that the second term will be small. We will make this precise, and find an interval for $t$ in which both terms are small.

Let $T_n = \inf\{t\geq 0:|M_t^{(n)}| = a_n\}$. Then we can couple the walks in such a way that $M_t^{(n)}=W_t^{(n)}$ for $t\leq T_n$. The goal now is to use Proposition \ref{prop:hitboundary} to show that $T_n$ is likely to be large. The conditions on $\rho_n$ give that for some $\delta>0$,
\begin{align}\label{eq:rhocond}
-1<\log(\rho_n)<-\frac{(2+\delta)\log(a_n)}{a_n}.
\end{align}
Since $c_n^{-1}\log(a_n)\to \infty$, the left hand side implies that $c_n^{-1}\log(a_n)\gg\log(\rho_n^{-1})$. Using \eqref{eq:observation_kappa}, we obtain for $t\in I_n$,
\begin{align}\label{eq:kappat1}
\kappa_n t \geq c_n^{-1}a_n\log(a_n) \gg a_n\log(\rho_n^{-1}).
\end{align}
Combining this with the right hand side of \eqref{eq:rhocond} gives
\begin{align}\label{eq:kappat2}
\kappa_n t > a_n\log(\rho_n^{-1}) > (2+\delta)\log(a_n) > \log(\sigma_n)\to\infty.
\end{align}
Take $\varepsilon>0$ arbitrary and choose $C_1$ and $C_2$ as in Proposition \ref{prop:hitboundary}. For $n$ large enough, by \eqref{eq:kappat1} and \eqref{eq:kappat2} we get that $\kappa_n t \geq C_1+C_2\cdot a_n\log(\rho_n^{-1})$, and hence we find
\begin{align}
\left|\mathbb{P}(M_t^{(n)}\leq k)-\mathbb{P}(W_t^{(n)}\leq k)\right| \leq& \left|\mathbb{P}(M_t^{(n)}\leq k,T_n\geq t)-\mathbb{P}(W_t^{(n)}\leq k, T_n\geq t)\right|\nonumber\\
& +\left|\mathbb{P}(M_t^{(n)}\leq k,T_n< t)-\mathbb{P}(W_t^{(n)}\leq k, T_n< t)\right|\nonumber\\
\leq &\ \mathbb{P}(T_n<t)\leq (2+\varepsilon)\kappa_nt\rho_n^{a_n/2-1},
\end{align}
For all $t\in I_n$, we get
\begin{align}
\left|\mathbb{P}(M_t^{(n)}\leq k)-\mathbb{P}(W_t^{(n)}\leq k)\right| \leq   (2+\varepsilon)\kappa_nt\rho_n^{a_n/2-1} \leq (2+\varepsilon)ec_n \stackrel{n\to\infty}{\longrightarrow} 0.
\end{align}
For the second term in \eqref{eq:probdiff}, Proposition \ref{prop:mixing} gives
\begin{align}
\left|\mathbb{P}(W_t^{(n)}\leq k)-\mathbb{P}(W_\infty^{(n)}\leq k)\right| \leq 2a_ne^{-d_nt}. 
\end{align} 
Taking $t\in I_n$ again, we obtain
\begin{align}
\left|\mathbb{P}(W_t^{(n)}\leq k)-\mathbb{P}(W_\infty^{(n)}\leq k)\right| \leq 2a_ne^{-c_n^{-1}\log(a_n)} = 2a_n^{1-c_n^{-1}} \stackrel{n\to\infty}{\longrightarrow} 0.
\end{align}
The upper bounds for the terms in \eqref{eq:probdiff} are independent of $t$, so that
\begin{align}
\lim_{n\to\infty}\sup_{t\in I_n}\Big|\mathbb{P}(&\left.\!\!M_t^{(n)}\leq k)-\mathbb{P}(W_\infty^{(n)}\leq k)\right|=0.
\end{align}
Theorem \ref{thm:statdistr} yields for all $x\in\mathbb{R}$
\begin{align}
\left|\mathbb{P}(Z\leq x)-\mathbb{P}(W_\infty^{(n)}/\sigma_n\leq x)\right|\to 0,
\end{align}
from which we conclude that
\begin{align}
\lim_{n\to\infty}\sup_{t\in I_n} \left|\mathbb{P}(M_t^{(n)}/\sigma_n\leq x)-\mathbb{P}(Z\leq x)\right|=0.
\end{align}
\end{proof}

\subsection{An example}

In this section we illustrate Theorem \ref{thm:main} by simulation of a simple example. Let $\sigma_n = \sqrt{n}$ and $a_n = \lceil \sigma_n^{3/2}\rceil=\lceil n^{3/4}\rceil$. Consider a sequence of random walks $\left\{M_t^{(n)},t\geq 0\right\}$ on $\mathbb{Z}$ with rates
\begin{align}
\lambda_k^{(n)} = 1-\frac kn,\qquad \mu_{k}^{(n)} = 1, 
\end{align}
for $-a_n\leq k\leq a_n$. For the sake of illustration, we take absorbing boundaries, i.e. if the walk leaves $[-a_n,a_n]$ and hits $-a_n-1$ or $a_n+1$, it stops moving. 
Then condition 1 for the theorem is satisfied. Defining $h_n:\mathbb{R}\to\mathbb{R}$ by $h_n(x) = -\frac xn$, the difference of the rates is given by 
\begin{align}
\lambda_k^{(n)} - \mu_{k}^{(n)} = -\frac kn = h_n(k). 
\end{align}
Letting $d_n = \frac 1n$, the derivative of $f_n$ is 
\begin{align}
\frac{d}{dx} f_n(x) = -\frac 1n = -d_n,
\end{align}
so that condition 2 is satisfied as well. Taking $\rho_n$ as in \eqref{eq:constants1}, we obtain
\begin{align}
\rho_n-1 &= -1+\max_{\tfrac{a_n}{2}\leq |k| \leq a_n} \left(1-\frac kn\right)^{\text{sgn}(k)} 
\sim -\tfrac12 a_n^{-1/3}\sim -\tfrac12n^{-1/4},
\end{align}
satisfying condition 3. For $\kappa_n$ we find
\begin{align}
\kappa_n = \max_{\tfrac{a_n}{2}\leq |k| \leq a_n} (\lambda_k^{(n)}+\mu_k^{(n)}) =  \max_{\tfrac{a_n}{2}\leq |k| \leq a_n} \left(2-\frac kn\right) = 2+\frac{a_n}{n} \sim 2+n^{-1/4}.
\end{align}
In this example, the conclusion \eqref{eq:thmconclusion} of the theorem holds for any $c_n\to 0$ and
\begin{align}
I_n = \{t\geq 0: c_n^{-1}n\log(n)\leq t\leq c_ne^{\sqrt{n}/4}\}.
\end{align}
In this time interval, for $n$ large, the distribution of the walk $M_t^{(n)}$ is very close to a Gaussian. In fact, simulations show that convergence to a Gaussian distribution is probably still true in much larger intervals. The constant $1/4$ in the exponent of the upper bound of $I_n$ can presumably be improved by a more subtle estimation of the short term drift (compare with \eqref{eq:constants1} and the corresponding remark).

\begin{figure}
\includegraphics[width=\linewidth]{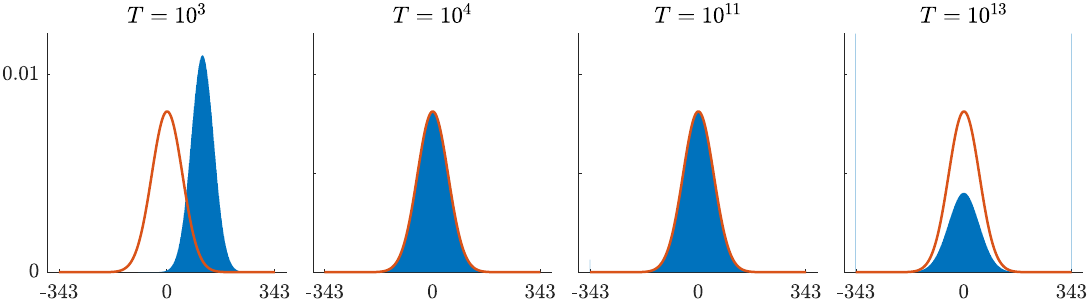}\caption{Distribution of $M_T^{(7^4)}$ for four different times $T$ compared to the probability density function of $N(0,7^4)$.}\label{fig:walk}
\end{figure}

We take a look at this process for $n=7^4 = 2401$. Then $a_n = 7^3 = 343$ and $\sigma_n = 7^2 = 49$. The walk moves between $\pm 344$ and stops when it hits one of these boundaries. We take initial condition $M_0^{(n)} = 344/2 = 172$. In view of Theorem \ref{thm:main}, we expect the distribution of $M_t^{(n)}$ to quickly approach a $N(0,\sigma_n^2)$ distribution and stay close to it for a long time. 

This is indeed exactly what we observe, see Figure \ref{fig:walk}. We numerically calculated the probability distribution of $M_T^{n}$ for four different values of $T$. For $T=10^3$, the influence of the initial condition $M_0^{n} = 172$ is still clearly visible: the expection of $M_T^{(n)}$ is about 113. For $T=10^4$, the expectation has dropped to 2.7. Letting $f_n$ be the probability density of the $N(0,\sigma_n^2)$, the distance 
\begin{align}
\sum_{k=-344}^{344}|\mathbb{P}(M_T^{(n)} = k) - f_n(k)|
\end{align} 
is already as small as 0.05. Then for a long time, the distribution is only very slightly changing. For $T=10^{11}$, the expected position is still very close to 0 (approximately -0.2), while the distance to the Gaussian in the above measure is 0.006. Finally, for $T=10^{13}$, the probability that the walk is absorbed at one of the boundaries is 0.505 and the distance to the Gaussian is 1.01 (essentially twice the absorption probability). Conditioned on the event that it is not yet absorbed, we still recognize the Gaussian shape. Somewhat surprising at first sight, the expected position is now -140. This indicates that absorption on the left is much more likely than absorption on the right.

\section{Application: the contact process on $K_n$}\label{sec:contactprocess}

In this section we aim to apply our results to the contact process on the complete graph $K_n$. In this graph each node is either infected or healthy. If it is infected, it is healing at rate 1. Furthermore, it is infecting each of its healthy neighbors at rate $\tau$. The number of infected individuals is a simple birth-death process $\{\tilde X_t\}_{t\geq 0}$ on $[n]$ with transition rates 
\begin{eqnarray}
k\rightarrow k+1 & \text{with rate} & k(n-k)\tau \\
k\rightarrow k-1 & \text{with rate} & k 
\end{eqnarray}
for all $k=0,\ldots,n$. Take $\tau=\frac\lambda n$ with constant $\lambda>1$ and let $k_0 = \lceil(1-\frac 1 \lambda)n\rceil = (1-\frac{1}{\lambda})n+\varepsilon$ with $\varepsilon\in [0,1)$. Note that $k_0$ corresponds to the point where the rates up and down are (approximately) balanced. Now consider the process $X_t := \tilde X_t - k_0$. This is a random walk on $[-k_0,n-k_0]$ which is attracted to $0$ and to which we will apply Theorem \ref{thm:main}. Let the rates up and down of this process be called $\lambda_k$ and $\mu_k$ respectively. Then for $-k_0 \leq k\le n-k_0$,
\begin{align}
\lambda_k = (k+k_0)(n-(k+k_0))\frac\lambda n,\qquad\mu_k = k+k_0.\label{eq:contactrates}
\end{align}
After some calculations, we obtain for $-k_0\leq k\leq n-k_0-1$ the ratio
\begin{align}
\frac{\lambda_k}{\mu_{k+1}} &= 1-\frac{k\lambda} n+\frac{(k+k_0-n)\lambda}{(k+k_0+1)n} -\varepsilon\frac\lambda n. 
\end{align}
Since $k_0$ is of the same order as $n$, the last two terms will be small if $k$ is small compared to $n$. In the first two terms, we recognize that the ratio $\lambda_k/\mu_{k+1}$ has the desired form (as in \eqref{eq:rates2}) for Gaussian quasi-stationary behavior. Indeed, we let 
\begin{align}\label{eq:sigmaa}
\sigma_n = \sqrt{\frac{n}{\lambda}}
\end{align}
and suppose that $a_n$ satisfies 
\begin{align}\label{eq:a}
a_n = o(\sigma_n^2) = o(n),\qquad a_n\geq \sigma_n\sqrt{(8+\delta)\log(\sigma_n)} 
\end{align}
for some $\delta>0$, so that \eqref{eq:defbn} holds. Take $-a_n \leq k \leq a_n-1$. Then we can write
\begin{align}
\frac{\lambda_k}{\mu_{k+1}} 
&= 1-\frac{k\lambda}{n}+\frac{\left(-(\lambda-1)\varepsilon-1\right)n+O(a_n)}{(1-\tfrac 1\lambda)n^2+O(n a_n)}\\
&= \left(1-\frac{k\lambda}{n}\right)\left(1+\delta_n\right)
\end{align}
with $|\delta_n| \leq \frac{\lambda^2}{(\lambda-1)n}+o(n^{-1}) = \frac{\lambda/(\lambda-1)}{\sigma_n^2}+o(\sigma_n^{-2})$. This shows that the first condition for Theorem \ref{thm:main} is satisfied for $n$ large enough. Therefore, the contact process on $K_n$ can already be predicted to have a quasi-stationary distribution close to a Gaussian with variance $\sigma_n^2 = n/\lambda$ and expectation $(1-\frac 1\lambda)n$. 

We proceed to check the second condition. 
Define for $x\in\mathbb{R}$
\begin{align}\label{eq:h} 
h_n(x) = (\lambda - 1)(x+k_0)-\frac\lambda n (x+k_0)^2.
\end{align} 
Then $h_n(k) = \lambda_k-\mu_k$. By noting that $h_n(x) = -(\lambda-1)(x+\varepsilon) -\frac{\lambda}{n}(x+\varepsilon)^2$, it follows that the zeros of $h_n$ are at $x = -k_0$ and $x = -\varepsilon$.
 The former corresponds to extinction, the latter to the expectation of the quasistationary distribution. For $x\in [-a_n,a_n]$ we have
\begin{align}
\frac{d}{dx}h_n(x) &= 1-\lambda-\frac{2\lambda}{n}(x+\varepsilon)\leq 1-\lambda+\frac{2\lambda a_n}{n}\nonumber\\
&= 1-\lambda + o(1),\label{eq:d} 
\end{align}
which is negative for $n$ large enough, since $\lambda>1$. This guarantees that $X_t$ will quickly reach it quasi-stationary distribution if it is in the interval $[-a_n/2,a_n/2]$ at time 0. This is a bit restrictive, but we will relax this condition later on. 

We now look at the third condition, which makes it hard to escape from the quasi-stationary behavior. The short term drift of the process is
\begin{align}\label{eq:contactratio}
\frac{\lambda_k}{\mu_k} = 1-\frac{k\lambda}{n}-\varepsilon\frac{\lambda}{n}.
\end{align}
Therefore, using the upper and lower bounds for $a_n$ in \eqref{eq:a}, we obtain
\begin{align}
\rho_n -1 &= -1+\max_{\frac{a_n}{2}\leq |k| \leq a_n} \left(\frac{\lambda_k}{\mu_k}\right)^{\text{sgn}(k)} \sim -\frac{a_n}{2}\frac{\lambda}{n} = -\frac{a_n}{2\sigma_n^2}
\leq -(2+\tilde\delta)\frac{\log(a_n)}{a_n},\label{eq:rho}
\end{align}
for some $\tilde\delta>0$, which means that the third condition is satisfied as well and consequently all conditions on the rates are fulfilled. 

Theorem \ref{thm:main} now can be applied to the process $X_t$, provided that $X_0$ is close enough to 0. For the contact process, this means we have to start close to the equilibrium point $k_0$. The next lemma will be used to relax this initial condition. If there are enough infected individuals initially, then the number of infected is likely to quickly be close to $k_0$, after which we can apply Theorem \ref{thm:main}.

\begin{lemma}\label{lem:contactprocess}
	Consider the contact process $\{\tilde X_t\}_{t\geq 0}$ on $K_n$ with healing rate $1$ and infection rate $\tau = \tfrac \lambda n$ for some $\lambda>1$. Let $c>\frac{2\lambda+2}{\lambda-1}$ be a constant and suppose 
	\begin{align}
	\tilde X_0 \geq c\log(n).
	\end{align} 
	Let $k_0 = \left\lceil\frac{\lambda-1}{\lambda}n\right\rceil$ and $a_n$ as in \eqref{eq:a} and define
	\begin{align}
	T = \inf\{t\geq 0: k_0-\frac{a_n}{2}\leq \tilde X_t\leq k_0+\frac{a_n}{2}\}.
	\end{align}
	Then 
	\begin{align}
	\lim_{n\to\infty}\mathbb{P}\left(T > \frac{n}{2}\right) = 0.
	\end{align}
\end{lemma}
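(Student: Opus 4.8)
The plan is to show that starting from $\tilde X_0\geq c\log n$, the process climbs up to within $a_n/2$ of $k_0$ before time $n/2$ with probability tending to $1$. I would split the argument into two phases. In the first phase, while $\tilde X_t$ is small (say below some threshold like $\varepsilon_0 n$ for a fixed small $\varepsilon_0$), the per-capita growth rate is essentially bounded below: the up-rate is $k(n-k)\tfrac\lambda n\geq k\lambda(1-\varepsilon_0)$ and the down-rate is $k$, so each infected individual contributes a net positive drift of roughly $(\lambda(1-\varepsilon_0)-1)k$, which is comparable to $k$ for $\varepsilon_0$ small. The key danger here is extinction (or near-extinction) before the process takes off. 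To control this I would compare $\tilde X_t$, while it stays in the low regime, with a supercritical branching process (or a biased random walk / birth-death chain with ratio bounded away from $1$): the probability that a birth-death chain started at $c\log n$ with up/down ratio $\geq r>1$ ever drops to $0$ is at most $r^{-c\log n}=n^{-c\log r}$, and the condition $c>\tfrac{2\lambda+2}{\lambda-1}$ is exactly what is needed to make this $o(1)$ after accounting for the value of $r$ near criticality. Conditioned on not going extinct, the same comparison shows the time to reach level $\varepsilon_0 n$ (or $k_0-a_n/2$, whichever comes first) is $O(\log n)$ with high probability, since a supercritical birth-death chain reaches a linear-in-$n$ level in logarithmically many steps and the event times are stochastically dominated by i.i.d.\ exponentials with rate $\Theta(k)\geq\Theta(\log n)$.

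In the second phase, once $\tilde X_t$ has reached a level of order $n$ but is still below $k_0-a_n/2$, the drift $h_n(\tilde X_t-k_0)$ is strictly positive and in fact bounded below by a constant times $n$: from \eqref{eq:h}, $h_n(x)=-(\lambda-1)(x+\varepsilon)-\tfrac\lambda n(x+\varepsilon)^2$, and for $x\in[-k_0+\varepsilon_0 n,\,-a_n/2]$ one checks $h_n(x)\geq c' n$ for some $c'>0$ depending only on $\lambda,\varepsilon_0$ (the drift is a downward parabola in $x+\varepsilon$ vanishing at $-k_0$ and $-\varepsilon$, hence uniformly positive and of order $n$ on any sub-interval bounded away from both endpoints). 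Using Lemma \ref{lem:mixing_differential}, $\tfrac{d}{dt}\mathbb{E}[\tilde X_t]=\mathbb{E}[h_n(\tilde X_t-k_0)]\geq c' n$ as long as the process has not yet entered the target window, so $\mathbb{E}[\tilde X_t]$ increases at linear-in-$n$ speed and the expected time to cover the remaining gap (which is at most $k_0\leq n$) is $O(1)$. To turn this drift statement into a with-high-probability hitting-time bound I would bound the fluctuations: either apply a martingale/optional-stopping argument to $\tilde X_t - \int_0^t h_n(\tilde X_s-k_0)\,ds$, whose quadratic variation accumulates at rate $\lambda_k+\mu_k=O(n)$, so over a time of order $1$ the deviations are $O(\sqrt n)=o(a_n)$, or simply couple with a birth-death process having constant drift $c'n$ to the right and invoke a Chernoff-type tail bound as in Proposition \ref{prop:extinctioncontinuous}. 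Either way the crossing of the remaining $O(n)$ gap happens in time $O(1)\ll n/2$ with probability $1-o(1)$.

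Combining the two phases: with probability $1-o(1)$ the process does not die in the low regime, reaches level $\varepsilon_0 n$ within time $O(\log n)$, and then reaches the window $[k_0-a_n/2,\,k_0+a_n/2]$ within a further time $O(1)$; the total is $o(n)$, in particular less than $n/2$ for $n$ large, which is the claim. (One should also note that the process cannot overshoot past $k_0+a_n/2$ before first entering $[k_0-a_n/2,k_0+a_n/2]$, since it moves by unit steps, so "hitting the window" is automatic once $\tilde X_t$ exceeds $k_0-a_n/2$.)

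The main obstacle I anticipate is the extinction estimate in the first phase: near criticality the effective up/down ratio $r$ is only $1+\Theta(1)$ away from $1$ when $\tilde X_t$ is genuinely small, but more importantly one must be careful that the comparison birth-death chain is chosen with the right constant so that $r^{-c\log n}=o(1)$ precisely under the hypothesis $c>\tfrac{2\lambda+2}{\lambda-1}$ — this forces a somewhat delicate choice of the threshold $\varepsilon_0$ and bookkeeping of the constant $\log r$, and is presumably where the specific numerical constant in the hypothesis comes from. The second-phase drift argument, by contrast, is robust and should follow cleanly from Lemma \ref{lem:mixing_differential} together with a standard concentration bound.
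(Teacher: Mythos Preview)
Your overall strategy (control the chance of dropping too low, then show the process climbs to the target quickly) is sound and is indeed the skeleton of the paper's proof, but several steps are either wrong as stated or not justified, and your identification of where the constant $c>\tfrac{2(\lambda+1)}{\lambda-1}$ enters is off.

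First, you only treat $\tilde X_0<k_0-\tfrac{a_n}{2}$; the case $\tilde X_0>k_0+\tfrac{a_n}{2}$ must be handled separately (the paper does this in a few lines via Lemma~\ref{lem:linear_drift}). Second, the sentence ``a supercritical birth--death chain reaches a linear-in-$n$ level in logarithmically many steps'' is false: the process moves by $\pm 1$, so going from $c\log n$ to $\varepsilon_0 n$ requires $\Theta(n)$ jumps. The \emph{time} can be $O(\log n)$ because the total jump rate at level $k$ is $\Theta(k)$, but that is not the argument you wrote. Third, your two-phase split does not address backtracking: once in phase~2 the process can drop below $\varepsilon_0 n$ again, and you need either a uniform lower barrier or an iteration argument. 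Fourth, and most importantly, in your setup the extinction bound $r^{-c\log n}$ with $r=\lambda(1-\varepsilon_0)>1$ is $o(1)$ for \emph{any} $c>0$; the specific hypothesis $c>\tfrac{2(\lambda+1)}{\lambda-1}$ is not ``exactly what is needed'' there.

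The paper avoids the phase split entirely. For $\tilde X_0<k_0$ it sets $a=\tfrac{c}{2}\log n$, $b=2a$, and shows via Proposition~\ref{prop:extinctioncontinuous} (applied to a reflected process on $[0,b]$ with $\rho\le \tfrac{2}{\lambda+1}$) that $\mathbb{P}(\tilde X\text{ hits }a\text{ before time }n/2)\le C(b-a)\cdot\tfrac{n}{2}\cdot(\tfrac{2}{\lambda+1})^{b-a}$, and it is \emph{here} that $c>\tfrac{2(\lambda+1)}{\lambda-1}$ is used to make the exponent of $n$ negative. Simultaneously, on the event $\{\tilde X_t>a\text{ for all }t\le n/2\}$, the distance $k_0-\tilde X_t$ is dominated by a birth--death process $Z_t$ on $[0,k_0-a]$ whose quadratic drift $-(k-1)(k_0-k)\lambda/n$ is bounded above by the linear function $-\tfrac{a\lambda}{n}(k-1)$; Lemma~\ref{lem:mixing_differential} and Markov's inequality then give $\mathbb{P}(Z_{n/2}\ge 2)\le k_0\,n^{-c\lambda/4}\to 0$. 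A single union bound finishes. The key difference from your plan is that the paper's lower barrier $a$ is at $\Theta(\log n)$, not $\Theta(n)$, and the approach-to-$k_0$ bound is a one-shot linear-drift estimate valid on the whole range $[a,k_0]$ (at the cost of needing time $n/2$ rather than $O(\log n)$), which sidesteps all phase-transition bookkeeping.
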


\begin{proof}
We distinguish two cases, $\tilde X_0\geq k_0$ and $\tilde X_0<k_0$. We will show that in the first case $\mathbb{P}(T>t)$ goes to 0 exponentially in $t$. The second case requires more carefulness, since $\tilde X_t$ could go extinct before reaching $k_0-\frac{a_n}{2}$. Hence, for $n$ fixed $\mathbb{P}(T=\infty)$ is strictly positive. Nevertheless it is still true that $\mathbb{P}(T>t)$ as a function of $t$ quickly decreases to $\mathbb{P}(T=\infty)$. The latter probability goes to 0 as $n$ increases.

\textbf{Case 1:} $\tilde X_0\geq k_0$.
 This case is easy, since the translated process $X_t = \tilde X_t-k_0$ can be dominated by a birth-death process with negative linear speed, see Section \ref{sec:neglindrift}. In this case we can write
\begin{align}
T = \inf\{t\geq 0: \tilde X_t - k_0 \leq \frac{a_n}{2}\} = \inf\{t\geq 0: X_t \leq  \frac{a_n}{2}\}. 
\end{align}
 Let $T^+ = \inf\{t\geq 0: X_t =0 \}$ and define the absorbed process by
\begin{align}
X_t^+ = \begin{cases} X_t & t<T^+, \\ 0 & t\geq T^+.\end{cases}
\end{align}
Then $X_t^+$ is a proper birth-death process on $[n-k_0]$ with extinction time $T^+>T$. Taking $h_n(x)$ as in \eqref{eq:h}, we obtain for all $x>0$
\begin{align}
\frac{d}{dx} h_n(x) 
& = -(\lambda-1)-\frac{2\lambda}{n}(x+\varepsilon)<1-\lambda<0.
\end{align}
Since $h_n(0)\leq 0$, the speed of $X_t^+$ is at most a negative linear function of $k$:
\begin{align}
\lambda_k-\mu_k = h_n(k) \leq (1-\lambda)k,\qquad k\geq 0.
\end{align}
By Lemma \ref{lem:linear_drift} and a suitable coupling it follows that 
\begin{align}
\mathbb{P}(T>t) \leq \mathbb{P}(T^+>t) \leq (n-k_0)\cdot e^{(1-\lambda)t},
\end{align}
which goes to 0 for $t = \tfrac n 2$.

\textbf{Case 2:} $c\log(n)\leq \tilde X_0=m<k_0$. Our argument in this case roughly goes as follows. We will choose some $a\in [1,m]$ and we will show that the hitting time of $a$ is large by coupling with a birth-death process $Y_t$. For $t$ less than the hitting time of $a$, we can couple $|\tilde X_t-k_0|$ to a birth death process $Z_t$ with negative speed, which quickly dies out. The conclusion then is that with high probability $\tilde X_t$ hits $k_0-\frac{a_n}{2}$ before $a$ and that $T$ is small. See Figure \ref{fig:lemma_cp} for an illustration of the typical situation.

\begin{figure}
\includegraphics[width=\linewidth]{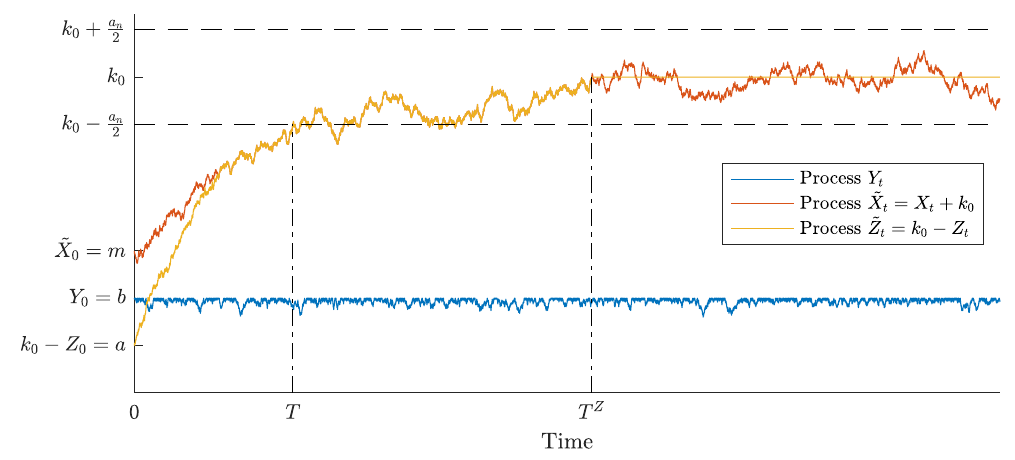}\caption{$Y_t$ is a birth death process on $[b]$. It hits $a$ at time $T_{b,a}$ (outside the plot window) and by coupling $Y_t\leq \tilde X_t$ for all $t$. In particular $Y_t$ hits $a$ before $\tilde X_t$ does, so $T_{b,a}\leq T_a$. $Z_t$ is a birth death process on $[k_0-a]$. The process $\tilde Z_t = k_0-Z_t$ hits $k_0-1$ at time $T^Z$. By coupling $\tilde Z_t\leq \tilde X_t$ for $t\leq \min\{T^Z,T_a\}$. Both $Y_t$ and $\tilde Z_t$ have a positive drift, so typically $T_a$ is much larger than $T^Z$.}\label{fig:lemma_cp}
\end{figure}

Let $a,b\in\mathbb{N}$  with $a\leq b\leq \min\{m,(k_0-\varepsilon)/2\}$ (remember that $k_0-\varepsilon = (1-1/
\lambda)n$). We will first use Proposition \ref{prop:extinctioncontinuous} to control the hitting time of $a$, defined by 
\begin{align}
T_a := \inf\{t\geq 0:\tilde X_t = a\} = \inf\{t\geq 0: X_t = a-k_0\}.
\end{align}
Let $\{Y_t,t\in\mathbb{R}\}$ be a birth-death process on $[b]$ with birth rate $\tilde \lambda_k = k(n-k)\lambda/n$ and death rate $\tilde \mu_k = k$. For $k=b$, we set the birth rate equal to 0. Let $Y_0 = b$ and $T_{b,a} = \inf\{t\geq 0:Y_t = a\}$. Clearly we can couple $Y_t$ and $\tilde X_t$ such that $Y_t\leq \tilde X_t$ for all $t$. Let 
\begin{align}
\tilde \kappa &= \max_{0\leq k\leq b}\tilde \lambda_k+\tilde \mu_k = \max_{0\leq k\leq b} (1+\lambda)k-\tfrac{\lambda}{n}k^2,\\
\tilde \rho &= \max_{0\leq k\leq b }\frac{\tilde \mu_k}{\tilde \lambda_k} =\max_{0\leq k\leq b } \frac{1}{\lambda(1-\frac k n)}. 
\end{align}
Then 
\begin{align}
b\leq \tilde \kappa \leq (1+\lambda) b,\qquad \frac{1}{\lambda}\leq \tilde \rho \leq \frac{1}{\lambda (1-\frac{k_0-\varepsilon}{2n})} = \frac{2}{\lambda+1}<1.
\end{align}
Proposition \ref{prop:extinctioncontinuous} applies to $Y_t$. Let $\delta>0$ and choose $C_1$ and $C_2$ as in that proposition.
Suppose $b\to\infty$ as $n\to\infty$ and let $C_3>C_2\log(\lambda)$. Then
\begin{align}
\tilde\kappa t \geq bt \geq C_1+C_2b\log(\lambda)\geq C_1+C_2b\log(\tilde\rho^{-1})
\end{align}
if $t>C_3$ and $n$ large enough, Proposition \ref{prop:extinctioncontinuous} then gives a bound on the extinction time of $Y_t$. In fact, we can bound $T_{a,b}$ analogously if $b-a\to\infty$ as $n\to\infty$. We obtain (for $t>C_3$ and $n$ large enough) that 
\begin{align}
\mathbb{P}(T_{b,a}<t) \leq C_4(b-a)t\left(\frac{2}{\lambda+1}\right)^{b-a},
\end{align}
with $C_4 = (1+\delta)(1+\lambda)$. By the coupling $Y_t \leq \tilde X_t$, we have $T_{b,a}\leq T_a$ so that 
\begin{align}\label{eq:Tabound}
\mathbb{P}(T_{a}<t) \leq C_4(b-a)t\left(\frac{2}{\lambda+1}\right)^{b-a}.
\end{align}

The second step in our argument involves another coupling. Consider the distance $|X_t|=k_0-\tilde X_t$ before $\tilde X_t$ hits $a$ or $k_0$. If $|X_t|=k$ for some $0< k\leq k_0-a$, then $X_t = -k$. Therefore, the distance increases at rate $\mu_{-k}$ and decreases at rate $\lambda_{-k}$. We will couple the distance to a birth-death process $\{Z_t,t\in\mathbb{R}\}$ on $[k_0-a]$, having birth rates $\lambda_k^Z$ and death rates $\mu_k^Z $ defined by
\begin{align}
\lambda_k^Z &:= \mu_{-k} = k_0-k,&&\lambda^Z_0 = \lambda^Z_{k_0-a}=0,\\
\mu_k^Z &:= \lambda_{-k} = (k_0-k)(n-(k_0-k))\frac{\lambda}{n}, &&\mu^Z_0=0.
\end{align}
Take initial condition $Z_0=k_0-a$, and let 
\begin{align}
T^Z = \inf\{t:Z_t = 1\}.
\end{align}
Couple such that $k_0-\tilde X_t\leq Z_t$ for all $t\leq\min\{T^Z,T_a\}$. If $T^Z\leq T_a$, then $T<T^Z$. Therefore, $T^Z$ satisfies $T^Z > \min\{T,T_a\}$. To bound $T^Z$, we will apply Lemma \ref{lem:mixing_differential}. First we calculate the speed
\begin{align}
\lambda_k^Z-\mu_k^Z &= (k_0-k)(\frac{n}{\lambda}-n+k_0-k)\frac{\lambda}{n}\\
& = (k_0-k)(\varepsilon-k)\frac{\lambda}{n} \leq -(k-1)(k_0-k)\frac{\lambda}{n}.\label{eq:lowerdrift}
\end{align} 
For $1\leq k\leq k_0-a$, this speed is negative and convex in $k$. It therefore can be upper bounded by a linear function which intersects the right hand side of \eqref{eq:lowerdrift} for $k=1$ and $k=k_0-a$. This gives
\begin{align}
\lambda_k^Z-\mu_k^Z \leq -\frac{a(k-1)\lambda}{n}.
\end{align}
We now find the differential inequality 
\begin{align}
\frac{d}{dt}\mathbb{E}[Z_t] \leq -\frac{a\lambda}{n}\cdot (\mathbb{E}[Z_t]-1).
\end{align}
It follows that 
\begin{align}
\mathbb{E}[Z_t]-1 \leq (k_0-a-1)\cdot e^{-\tfrac{a\lambda}{n}t},
\end{align}
and hence we obtain
\begin{align}\label{eq:TZbound}
\mathbb{P}(T^Z>t) = \mathbb{P}(Z_t-1\geq 1) \leq (k_0-a-1)\cdot e^{-\tfrac{a\lambda}{n}t}.
\end{align}

Now we can finish our argument by using \eqref{eq:Tabound} and \eqref{eq:TZbound} and the couplings between $\tilde X_t$, $Y_t$ and $Z_t$. For $n$ large enough,
\begin{align}
\mathbb{P}(T>t) &= \mathbb{P}(T>t,T_a<t)+\mathbb{P}(T>t,T_a\geq t)\\
&\leq \mathbb{P}(T_a<t)+\mathbb{P}(\min\{T,T_a\}\geq t)\\
&\leq \mathbb{P}(T_a<t)+\mathbb{P}(T^Z>t)\\
&\leq C_4(b-a)t\left(\frac{2}{\lambda+1}\right)^{b-a} + k_0\cdot e^{-\tfrac{a\lambda}{n}t}\label{eq:sombound}.
\end{align}
Let $a = \tfrac{c}{2}\log(n)$. Choose $b=2a$ and $t=\tfrac{n}{2}$. Then $b\leq m$ as required and $\frac{c}{2}>\tfrac{\lambda+1}{\lambda-1}$. For the first term in \eqref{eq:sombound} this gives
\begin{align}
\mathbb{P}(T_a<\tfrac{n}{2}) &\leq \frac{ C_4c}{4}\log(n)n^{1+\tfrac{c}{2}\log\left(\frac{2}{\lambda+1}\right)} \stackrel{n\to\infty}{\longrightarrow}0, 
\end{align}
using that $1+\tfrac{c}{2}\log(\tfrac{2}{\lambda+1})<0$ for $\lambda>1$.
For the second term in \eqref{eq:sombound}, since $c\lambda/4>1$, we obtain
\begin{align}
\mathbb{P}(T^Z>\tfrac n 2) &\leq k_0 e^{-c\lambda\log(n)/4} \leq n^{1-\tfrac{c\lambda}{4}}\stackrel{n\to\infty}{\longrightarrow} 0,
\end{align}
giving the desired conclusion.
\end{proof}

We are ready to state our metastability result for the contact process on the complete graph $K_n$ with infection rate $\tau=\lambda/n$ for some $\lambda>1$. At $t=0$, we can start with a quite small set of infected nodes (thanks to Lemma \ref{lem:contactprocess}). The number of infected individuals will rapidly go towards its equilibrium $\frac{\lambda-1}{\lambda}n$. In a large time window (from $t$ linear in $n$ to almost exponential in $n$), the number of infected individuals will behave like a Gaussian centered at $\frac{\lambda-1}{\lambda}n$ and with variance $n/\lambda$. The time window in the theorem is maximized by taking a function $g(n)$ which goes to 0 slowly. 

\begin{theorem}\label{thm:contactprocess}
Let $\lambda>1$. Consider for each $n\in\mathbb{N}$ the contact process $\{\tilde X_t^{(n)}\}_{t\geq 0}$ on the complete graph $K_n$ with healing rate $1$ and infection rate $\tau = \tfrac \lambda n$. Let $c>\frac{2(\lambda+2)}{\lambda-1}$ be a constant and suppose that the initial distributions satisfy
\begin{align}
\lim_{n\to\infty}\mathbb{P}(\tilde X_0^{(n)} \geq c\log(n)) = 1.
\end{align} 
Let $Z\sim N(0,1)$ and define $\mu = \frac{\lambda-1}{\lambda}n$ and $\sigma_n=\sqrt{\tfrac{n}{\lambda}}$. Let $g(n)$ be any function with $\lim_{n\to\infty}g(n) = 0$ and $g(n)\geq \log(n)/n$. Then for all $x\in\mathbb{R}$,
\begin{align}
\lim_{n\to\infty}\sup_{t\in \left[n,\exp(g(n)\cdot n)\right]}\left\lvert\mathbb{P}(\tfrac{\tilde X_t^{(n)}-\mu}{\sigma_n}\leq x)-\mathbb{P}(Z\leq x)\right\rvert =0.
\end{align}
\end{theorem}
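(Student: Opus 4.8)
The plan is to reduce the statement to Theorem~\ref{thm:main} applied to the translated walk $X_t^{(n)}:=\tilde X_t^{(n)}-k_0$, preceded by a short ``burn-in'' in which Lemma~\ref{lem:contactprocess} brings the process from its small initial value into the window $[k_0-\tfrac{a_n}2,\,k_0+\tfrac{a_n}2]$ within time $n/2$ with probability tending to $1$. The only genuine choice to make is that of the free parameter $a_n$, together with the sequence $c_n$ appearing in Theorem~\ref{thm:main}, so that the interval $I_n$ produced by that theorem swallows $[n/2,\exp(g(n)n)]$.

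\textbf{Choice of parameters.} I would keep $\sigma_n=\sqrt{n/\lambda}$ as in \eqref{eq:sigmaa} and take $a_n$ to be an even integer with $a_n^2\asymp\tfrac n\lambda\big(g(n)n+\log n\big)$. Since $g(n)\to0$ this makes $a_n=o(n)$, so \eqref{eq:a} holds (the $\log n$ term also ensures $a_n\ge\sigma_n\sqrt{(8+\delta)\log\sigma_n}$ for some $\delta>0$), and by the computations preceding the statement---equations \eqref{eq:d}, \eqref{eq:rho}, and the estimate on $\delta_n$---the walk $X_t^{(n)}$ restricted to $[-a_n,a_n]$ satisfies the three conditions of Theorem~\ref{thm:main} with $d_n\to\lambda-1>0$, $\kappa_n=O(n)$, and $\rho_n^{-a_n/2}=\exp\!\big((1+o(1))\tfrac{\lambda a_n^2}{4n}\big)$. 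Taking $c_n=1/\log n\to0$, the lower endpoint of $I_n$, namely $d_n^{-1}c_n^{-1}\log a_n$, is $O\big((\log n)^2\big)\le n/2$ for large $n$, while the upper endpoint $c_n\kappa_n^{-1}\rho_n^{-a_n/2}$ exceeds $\exp(g(n)n)$ for large $n$ since $a_n^2/n\gg g(n)n$ by construction. Hence $[n/2,\exp(g(n)n)]\subseteq I_n$ for large $n$; in particular $I_n\neq\emptyset$, so Theorem~\ref{thm:main} applies to $X_t^{(n)}$.

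\textbf{A uniform version of Theorem~\ref{thm:main}.} Because Theorem~\ref{thm:main} holds for $X^{(n)}$ for \emph{every} initial distribution supported on $[-a_n/2,a_n/2]\cap\mathbb{Z}$, a standard subsequence argument upgrades its conclusion to uniformity over the (deterministic) starting point. Absorbing in addition the deterministic shift $\tfrac{k_0-\mu}{\sigma_n}=\tfrac{\varepsilon}{\sigma_n}\to0$, using continuity of the normal c.d.f.\ and monotonicity, this yields for each fixed $x$
\begin{align}
r_n:=\sup_{\substack{y\in\mathbb{Z}\\ |y|\le a_n/2}}\ \sup_{s\in I_n}\Big|\mathbb{P}\big(\tfrac{\tilde X_s^{(n)}-\mu}{\sigma_n}\le x\ \big|\ \tilde X_0^{(n)}=y+k_0\big)-\mathbb{P}(Z\le x)\Big|\ \longrightarrow\ 0 .
\end{align}

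\textbf{Burn-in, restart, and assembly.} Put $A_n=\{\tilde X_0^{(n)}\ge c\log n\}$, so $\mathbb{P}(A_n)\to1$ by hypothesis; since $c>\tfrac{2(\lambda+2)}{\lambda-1}>\tfrac{2(\lambda+1)}{\lambda-1}$, Lemma~\ref{lem:contactprocess} applies, and the bound on $\mathbb{P}(T>n/2)$ proved there is uniform over initial states $\ge c\log n$, so $\mathbb{P}(A_n,\,T>n/2)\to0$ with $T:=\inf\{t\ge0:|\tilde X_t^{(n)}-k_0|\le a_n/2\}$. Now fix $t\in[n,\exp(g(n)n)]$. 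On the $\mathcal F_T$-measurable event $\{A_n,\,T\le n/2\}$ one has $\tilde X_T^{(n)}\in[k_0-\tfrac{a_n}2,\,k_0+\tfrac{a_n}2]$ and $t-T\in[n/2,\exp(g(n)n)]\subseteq I_n$, so the strong Markov property at $T$ combined with the previous display gives $\big|\mathbb{P}(\tfrac{\tilde X_t^{(n)}-\mu}{\sigma_n}\le x\mid\mathcal F_T)-\mathbb{P}(Z\le x)\big|\le r_n$ on that event. Taking expectations and adding the contribution of the complementary event,
\begin{align}
\sup_{t\in[n,\exp(g(n)n)]}\Big|\mathbb{P}\big(\tfrac{\tilde X_t^{(n)}-\mu}{\sigma_n}\le x\big)-\mathbb{P}(Z\le x)\Big|\ \le\ r_n+2\big(\mathbb{P}(A_n^c)+\mathbb{P}(A_n,\,T>n/2)\big)\ \longrightarrow\ 0 ,
\end{align}
which is exactly the claim. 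The substantive work---the linearised-drift conditions, the hitting-time tail bounds, and Lemma~\ref{lem:contactprocess}---is already done; the residual content is bookkeeping, and the one delicate point is the scale matching in the parameter choice, where one needs \emph{simultaneously} $a_n=o(n)$ (for the Gaussian approximation) and $a_n^2/n\gg g(n)n$ (for $I_n$ to reach $\exp(g(n)n)$)---precisely what $g(n)\to0$ together with $g(n)\ge\log n/n$ permits---plus the clean treatment of the random restart time $T$ via the uniform-in-initial-condition form of Theorem~\ref{thm:main}.
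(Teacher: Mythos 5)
Your proposal is correct and follows essentially the same route as the paper: verify the three conditions of Theorem \ref{thm:main} for $X_t^{(n)}=\tilde X_t^{(n)}-k_0$ with $a_n$ and $c_n$ tuned so that $I_n\supseteq[n/2,\exp(g(n)n)]$, use Lemma \ref{lem:contactprocess} for the burn-in, and restart at the (random) entry time into the window --- the paper implements the restart by coupling to a single auxiliary copy started at the deterministic entry point $\lceil k_0-a_n/2\rceil$ and taking a supremum over entry times, whereas you condition on $\mathcal F_T$ and invoke uniformity of Theorem \ref{thm:main} over initial points in $[-a_n/2,a_n/2]$, which is a legitimate variant. The one imprecision is the justification ``$a_n^2/n\gg g(n)n$ by construction'': with $a_n^2\asymp\tfrac n\lambda(g(n)n+\log n)$ you only get $a_n^2/n$ of the \emph{same order} as $g(n)n$, so you must fix the implied constant large enough (e.g.\ $a_n^2\geq\tfrac{8n}{\lambda}(g(n)n+\log n)$, or the paper's $a_n=2n\sqrt{g(n)}$) to make $\tfrac{\lambda a_n^2}{4n}$ dominate $g(n)n+\log(\kappa_n c_n^{-1})$ --- a one-line repair, not a gap.
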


\begin{proof} The translated process $X_t^{(n)} = \tilde X_t^{(n)}-k_0$ satisfies all conditions for Theorem \ref{thm:main}, with the relevant quantities (for some $\delta>0$) defined as follows (cf. \eqref{eq:sigmaa},\eqref{eq:a},\eqref{eq:d},\eqref{eq:rho}).
\begin{align}
&\sigma = \sqrt{\frac{n}{\lambda}}, && (2+\delta)\sqrt{\frac{n\log(n)}{\lambda}} \leq a_n \ll n,\\ &d_n = \lambda-1+o(1), &&
\rho_n = 1 -\frac{\lambda a_n}{2n}+ o\!\left(\frac{a_n}{n}\right)
\end{align} 
For $\kappa_n$, using the rates in \eqref{eq:contactrates}, we find
\begin{align}
\kappa_n = \max_{\frac{a_n}{2}\leq |k|\leq a_n} (\lambda_k+\mu_k) = 2k_0+\mathcal{O}(a_n)= \frac{2(\lambda-1)}{\lambda}n +o(n).
\end{align}
Now we can compute the bounds of the interval $I_n$ as defined in Theorem \ref{thm:main}. 
\begin{align}
&d_n^{-1}\log(a_n) = o(\log(n)).\\
&\kappa_n^{-1}\rho_n^{-a_n/2} =\exp\left(\frac{\lambda a_n^2}{4n}+o\left(\frac{a_n^2}{n}\right)\right).
\end{align}
Define $c_n\to 0$ by
\begin{align}
c_n = \frac{2d_n^{-1}\log(a_n)}{n} = o\left(\frac{\log(n)}{n}\right), 
\end{align}
and let $a_n = 2n\sqrt{g(n)}$ for some $g(n)$ that (slowly) goes to 0. Then, for large $n$,
\begin{align}
c_n\kappa^{-1}\rho_n^{-a_n/2}= \exp\left((\lambda+o(1))g(n)\cdot n\right)\geq \exp(g(n)\cdot n),
\end{align}
so that the interval $I_n$ contains the interval $\left[\tfrac{n}{2},\exp(g(n)\cdot n)\right]$. 

Fix $n$, write $\tilde X_t$ for $\tilde X_t^{(n)}$ and assume that 
$c\log(n)\leq \tilde X_0<k_0-\tfrac{a_n}{2}$. Let $T = \inf\{t\geq 0: k_0-\tfrac{a_n}{2}\leq \tilde X_t \leq k_0+\tfrac{a_n}{2}\}$. Note that for all $t\geq 0$ and all $k$, we can write
\begin{align}\label{eq:boundX}
\mathbb{P}(\tilde X_t\leq k) = \mathbb{P}(\tilde X_t\leq k\mid T\leq \tfrac n 2)&-\mathbb{P}(\tilde X_t\leq k\mid T\leq \tfrac n 2)\cdot\mathbb{P}(T>\tfrac n 2)\nonumber \\
&+\mathbb{P}(\tilde X_t \leq k \mid T>\tfrac n 2)\cdot\mathbb{P}(T>\tfrac n 2).
\end{align}
Let $\{\tilde Y_t\}_{t\geq 0}$ be an independent realization of the contact process on $K_n$, with initial condition $\tilde Y_0 = \lceil k_0-\tfrac{a_n}{2}\rceil$. If $T\leq \tfrac{n}{2}$, then $\tilde X_T = \lceil k_0-\tfrac{a_n}{2}\rceil$. In this case $\tilde X_{t+T}$ has the same distribution as $\tilde Y_t$, since the contact process is a Markov process. Therefore, for all $k$, all $t\geq \tfrac n 2$ and any real number $x$, 
\begin{align}\label{eq:grens1}
\left\lvert\mathbb{P}(\tilde X_t\leq k\mid T\leq \tfrac n 2)-\mathbb{P}(Z\leq x)\right\rvert &\leq \sup_{0\leq s\leq \tfrac n 2}\left\lvert\mathbb{P}(\tilde Y_{t-s}\leq k)-\mathbb{P}(Z\leq x)\right\rvert.
\end{align}
Instead of $k$, we could take any real $y$. Taking $y=\sigma_n x+k_0$ and writing $Y_t$ for $\tilde Y_t-k_0$, the probability on the right can be written
\begin{align}
\mathbb{P}(\tilde Y_{t-s}\leq y) =  \mathbb{P}\left(\frac{Y_{t-s}}{\sigma_n}\leq x\right).
\end{align}
Substitute this and take the supremum over $t\in [n,e^{g(n)\cdot n}]$:
\begin{align}
&\sup_{n\leq t \leq \exp(g(n)\cdot n)}\sup_{0\leq s\leq \tfrac n 2}\left\lvert\mathbb{P}\left(\frac{Y_{t-s}}{\sigma_n}\leq x\right)-\mathbb{P}(Z\leq x)\right\rvert\nonumber\\
&\hspace{4cm} = \sup_{\tfrac{n}{2}\leq t \leq \exp(g(n)\cdot n)}\left\lvert\mathbb{P}\left(\frac{Y_{t}}{\sigma_n}\leq x\right)-\mathbb{P}(Z\leq x)\right\rvert
\end{align}
Finally, using \eqref{eq:boundX} we obtain
\begin{align}\label{eq:boundXYZ}
&\hspace{-1cm}\sup_{t\in [n,\exp(g(n)\cdot n)]}\left\lvert\mathbb{P}(\tfrac{\tilde X_t-k_0}{\sigma_n}\leq x)-\mathbb{P}(Z\leq x)\right\rvert\nonumber\\
&\leq \mathbb{P}(T>\tfrac n 2)+\sup_{t\in [\tfrac{n}{2},\exp(g(n)\cdot n)]}\left\lvert\mathbb{P}(\tfrac{Y_t}{\sigma_n}\leq x)-\mathbb{P}(Z\leq x)\right\rvert
\end{align}
Similar arguments give the same conclusion for $\tilde X_0 > k_0+\tfrac{a_n}{2}$ (with $\tilde Y_0 = \lceil k_0+\tfrac{a_n}{2}\rceil$). For $k_0-\tfrac{a_n}{2}\leq\tilde X_0\leq k_0+\tfrac{a_n}{2}$, we can just take $\tilde Y_t = \tilde X_t$. Hence, for any initial distribution $\tilde X_0\geq c\log(n)$, there is some initial distribution $\tilde Y_0$ on $[k_0-\tfrac{a_n}{2},k_0+\tfrac{a_n}{2}]$ such that \eqref{eq:boundXYZ}
holds. By Theorem \ref{thm:main} and Lemma \ref{lem:contactprocess}, we conclude that 
\begin{align}\label{eq:conclusion}
\lim_{n\to\infty}\sup_{t\in [n,\exp(g(n)\cdot n)]}\left\lvert\mathbb{P}(\tfrac{\tilde X_t-k_0}{\sigma_n}\leq x)-\mathbb{P}(Z\leq x)\right\rvert =0.
\end{align}
Clearly it is also sufficient if $\lim_{n\to\infty}\mathbb{P}(\tilde X_0\geq c\log(n)) = 1$, since $\mathbb{P}(X_t^{(n)}\leq x)$ converges to $\mathbb{P}(X_t^{(n)}\leq x\mid \tilde X_0\geq c\log(n))$, and \eqref{eq:conclusion} applies to the conditioned version of $X_t^{(n)}$. As a final remark, note that we can replace $\tilde X_t^{(n)}-k_0$ in the conclusion by  $\tilde X_t^{(n)}-\mu$ since $\tfrac{k_0-\mu}{\sigma_n}\to 0$.
\end{proof}

\appendix
\section{Proof of a technical lemma}

The following lemma is used in the proof of Proposition \ref{prop:statdist}.

\begin{lemma}\label{lem:sumnormdens}
For $b\geq \sigma\sqrt{2\log(\sigma)}$ we have
	\[ \sqrt{2\pi}\sigma -(1+2\sqrt{2\pi}) \leq \sum_{k=-b}^{b} e^{-\frac12\frac{k^2}{\sigma^2}} \leq \sqrt{2\pi}\sigma+1.\]	
 In particular, this implies for $\sigma$ large enough
 \[ \sqrt{2\pi}\sigma e^{-b/\sigma^2} \leq \sum_{k=-b}^{b} e^{-\frac12\frac{k^2}{\sigma^2}} \leq \sqrt{2\pi}\sigma e^{b/\sigma^2}.\]
 \end{lemma}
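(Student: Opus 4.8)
The plan is to compare $\sum_{k=-b}^{b}e^{-k^2/(2\sigma^2)}$ with the Gaussian integral $\int_{-\infty}^{\infty}e^{-x^2/(2\sigma^2)}\,dx=\sqrt{2\pi}\,\sigma$, exploiting that the summand $f(x):=e^{-x^2/(2\sigma^2)}$ is increasing on $(-\infty,0]$, decreasing on $[0,\infty)$, satisfies $0<f\le 1$, and has $f(0)=1$. Throughout I tacitly assume $\sigma\ge 1$ and $b\in\mathbb{N}$; otherwise the hypothesis $b\ge\sigma\sqrt{2\log\sigma}$ is not meaningful. Both displayed inequalities then reduce to standard integral-comparison estimates, the only genuinely quantitative ingredient being a bound on the tail $\int_b^\infty f$ that uses $b\ge\sigma\sqrt{2\log\sigma}$.

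For the upper bound, since $f$ is decreasing on $[0,\infty)$ one has $f(k)\le\int_{k-1}^{k}f(x)\,dx$ for every $k\ge 1$, and since $f$ is increasing on $(-\infty,0]$ one has $f(k)\le\int_{k}^{k+1}f(x)\,dx$ for every $k\le-1$. Summing these and adding the term $f(0)=1$ gives $\sum_{k=-b}^{b}f(k)\le 1+\int_{-b}^{b}f(x)\,dx\le 1+\sqrt{2\pi}\,\sigma$, i.e.\ the right-hand inequality.

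For the lower bound I instead use $f(k)\ge\int_{k}^{k+1}f(x)\,dx$ for $k\ge 1$ together with the symmetry $f(-k)=f(k)$, which yields $\sum_{k=-b}^{b}f(k)\ge f(0)+2\sum_{k=1}^{b}f(k)\ge 1+2\int_{1}^{b+1}f(x)\,dx\ge\sqrt{2\pi}\,\sigma-1-2\int_{b}^{\infty}f(x)\,dx$, where I used $\int_0^1 f\le 1$ and $\int_{b+1}^\infty f\le\int_b^\infty f$. The key point is the tail estimate: for $x\ge b$ we have $x/b\ge 1$, so $\int_b^\infty e^{-x^2/(2\sigma^2)}\,dx\le\int_b^\infty \frac{x}{b}\,e^{-x^2/(2\sigma^2)}\,dx=\frac{\sigma^2}{b}\,e^{-b^2/(2\sigma^2)}$; this is decreasing in $b$, and at $b=\sigma\sqrt{2\log\sigma}$ (where $e^{-b^2/(2\sigma^2)}=1/\sigma$) it equals $1/\sqrt{2\log\sigma}$, so $\int_b^\infty f\le 1/\sqrt{2\log\sigma}$. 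Hence $\sum_{k=-b}^{b}f(k)\ge\sqrt{2\pi}\,\sigma-1-2/\sqrt{2\log\sigma}$. For $\sigma\ge e^{1/(4\pi)}$ we have $2/\sqrt{2\log\sigma}\le 2\sqrt{2\pi}$ and the claimed bound follows; for $1\le\sigma<e^{1/(4\pi)}$ the number $\sqrt{2\pi}\,\sigma-(1+2\sqrt{2\pi})$ is negative while $\sum_{k=-b}^{b}f(k)\ge f(0)=1$, so the inequality is trivial. The two cutoffs meet exactly at $\sigma=e^{1/(4\pi)}$, which closes the first display.

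The second display follows by inserting the hypothesis on $b$ once more. For its right-hand side, $e^{b/\sigma^2}\ge 1+b/\sigma^2\ge 1+1/(\sqrt{2\pi}\,\sigma)$ as soon as $b\ge\sigma/\sqrt{2\pi}$ (true for $\sigma$ large), so $\sqrt{2\pi}\,\sigma\,e^{b/\sigma^2}\ge\sqrt{2\pi}\,\sigma+1\ge\sum_{k=-b}^{b}f(k)$. For its left-hand side, $b/\sigma^2\ge\sqrt{2\log\sigma}/\sigma\in(0,1)$ for $\sigma$ large, so $e^{-b/\sigma^2}\le e^{-\sqrt{2\log\sigma}/\sigma}\le 1-\sqrt{2\log\sigma}/(2\sigma)$ (using $e^{-y}\le 1-y/2$ on $[0,1]$), and this is $\le 1-(1+2\sqrt{2\pi})/(\sqrt{2\pi}\,\sigma)$ once $\sqrt{2\log\sigma}\ge 2(1+2\sqrt{2\pi})/\sqrt{2\pi}$, i.e.\ for $\sigma$ large; multiplying by $\sqrt{2\pi}\,\sigma$ and using the first display gives $\sqrt{2\pi}\,\sigma\,e^{-b/\sigma^2}\le\sqrt{2\pi}\,\sigma-(1+2\sqrt{2\pi})\le\sum_{k=-b}^{b}f(k)$. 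None of this is a real obstacle; the only points needing a little care are the case distinction for small $\sigma$ in the first inequality and checking that the numerical constants are compatible.
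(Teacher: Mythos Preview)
Your proof is correct and follows essentially the same approach as the paper: integral comparison for the sum using monotonicity of $f(x)=e^{-x^2/(2\sigma^2)}$, plus a Gaussian tail estimate exploiting $b\ge\sigma\sqrt{2\log\sigma}$. The only differences are cosmetic: the paper bounds the tail via the Chernoff inequality $\mathbb{P}(|Z|>t)\le 2e^{-t^2/2}$ (giving $2\int_b^\infty f\le 2\sqrt{2\pi}$ directly), whereas you use the Mill's-ratio-type estimate $\int_b^\infty f\le(\sigma^2/b)e^{-b^2/(2\sigma^2)}$ and then discard the extra sharpness; and for the second display the paper writes $1-(1+2\sqrt{2\pi})/(\sigma\sqrt{2\pi})\ge e^{-3/\sigma}\ge e^{-b/\sigma^2}$, while you go through $e^{-y}\le 1-y/2$ on $[0,1]$. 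One small quibble: the phrase ``the two cutoffs meet exactly at $\sigma=e^{1/(4\pi)}$'' is not quite accurate---the trivial range actually extends to $\sigma<(1+2\sqrt{2\pi})/\sqrt{2\pi}\approx 2.4$, so the two cases overlap rather than meet---but this only helps, and the argument is complete as written.
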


\begin{proof} Upper bound (for all $b\in\mathbb{N}$ and all $\sigma>0$):
\begin{align}
\sum_{k=-b}^be^{-\frac12\frac{k^2}{\sigma^2}} & = 1+2\sum_{k=1}^be^{-\frac12\frac{k^2}{\sigma^2}}\leq 1+2\int_0^b e^{-\frac12\frac{s^2}{\sigma^2}}ds\nonumber\\
&= 1+\sigma\int_{-b/\sigma}^{b/\sigma}e^{-\frac12 s^2}ds \leq 1+\sigma\int_{-\infty}^\infty e^{-\frac12 s^2}ds = 1+\sigma\sqrt{2\pi}\nonumber\\
& = \sigma\sqrt{2\pi}(1+\frac{1}{\sigma\sqrt{2\pi}})\leq \sigma\sqrt{2\pi}e^{1/\sigma\sqrt{2\pi}} \leq \sigma\sqrt{2\pi}e^{b/\sigma^2}.
\end{align}
Lower bound (using a Chernoff bound for the tail of a standard Gaussian $Z$):
\begin{align}
\sum_{k=-b}^be^{-\frac12\frac{k^2}{\sigma^2}} & = 1+2\sum_{k=1}^be^{-\frac12\frac{k^2}{\sigma^2}}\geq 1+2\int_1^{b} e^{-\frac12\frac{s^2}{\sigma^2}}ds\nonumber\\
&\geq -1+\int_{-b}^{b} e^{-\frac12\frac{s^2}{\sigma^2}}ds\geq -1+\sigma\sqrt{2\pi}\cdot \mathbb{P}\left(|Z|\leq \sqrt{2\log(\sigma)}\right)\nonumber\\
&\geq -1+\sigma\sqrt{2\pi}\cdot(1-2e^{-\frac{\sqrt{2\log(\sigma)}^2}{2}})= -1-2\sqrt{2\pi}+\sigma\sqrt{2\pi}\nonumber\\
& = \sigma\sqrt{2\pi}(1-\frac{1+2\sqrt{2\pi}}{\sigma\sqrt{2\pi}}) \geq \sigma\sqrt{2\pi} e^{-3/\sigma} \geq \sigma\sqrt{2\pi} e^{-b/\sigma^2}, 
\end{align}
for $\sigma$ large enough.
\end{proof}

\bibliographystyle{plain}
\bibliography{contactprocess}

\begin{thebibliography}{10}

\bibitem{BNNS21}
Shankar Bhamidi, Danny Nam, Oanh Nguyen, and Allan Sly.
\newblock Survival and extinction of epidemics on random graphs with general
  degree.
\newblock {\em Ann. Probab.}, 49(1):244--286, 2021.

\bibitem{CanSch15}
Van~Hao Can and Bruno Schapira.
\newblock Metastability for the contact process on the configuration model with
  infinite mean degree.
\newblock {\em Electron. J. Probab.}, 20:no. 26, 22, 2015.

\bibitem{CD21}
E.~Cator and H.~Don.
\newblock Explicit bounds for critical infection rates and expected extinction
  times of the contact process on finite random graphs.
\newblock {\em Bernoulli}, 27(3):1556--1582, 2021.

\bibitem{CM13}
E.~Cator and P.~Van~Mieghem.
\newblock Susceptible-infected-susceptible epidemics on the complete graph and
  the star graph: Exact analysis.
\newblock {\em Phys. Rev. E}, 87:012811, Jan 2013.

\bibitem{CMMV14}
Michael Cranston, Thomas Mountford, Jean-Christophe Mourrat, and Daniel
  Valesin.
\newblock The contact process on finite homogeneous trees revisited.
\newblock {\em ALEA Lat. Am. J. Probab. Math. Stat.}, 11(1):385--408, 2014.

\bibitem{DHB13}
Odo Diekmann, Hans Heesterbeek, and Tom Britton.
\newblock {\em Mathematical tools for understanding infectious disease
  dynamics}.
\newblock Princeton Series in Theoretical and Computational Biology. Princeton
  University Press, Princeton, NJ, 2013.

\bibitem{DurLiu88}
Richard Durrett and Xiu~Fang Liu.
\newblock The contact process on a finite set.
\newblock {\em Ann. Probab.}, 16(3):1158--1173, 1988.

\bibitem{DurSch88}
Richard Durrett and Roberto~H. Schonmann.
\newblock The contact process on a finite set. {II}.
\newblock {\em Ann. Probab.}, 16(4):1570--1583, 1988.

\bibitem{DurSchTan89}
Richard Durrett, Roberto~H. Schonmann, and Nelson~I. Tanaka.
\newblock The contact process on a finite set. {III}. {T}he critical case.
\newblock {\em Ann. Probab.}, 17(4):1303--1321, 1989.

\bibitem{H74}
T.~E. Harris.
\newblock Contact interactions on a lattice.
\newblock {\em Ann. Probability}, 2:969--988, 1974.

\bibitem{HD20}
Xiangying Huang and Rick Durrett.
\newblock The contact process on random graphs and {G}alton {W}atson trees.
\newblock {\em ALEA Lat. Am. J. Probab. Math. Stat.}, 17(1):159--182, 2020.

\bibitem{L96}
Thomas~M. Liggett.
\newblock Multiple transition points for the contact process on the binary
  tree.
\newblock {\em Ann. Probab.}, 24(4):1675--1710, 1996.

\bibitem{Lig99}
Thomas~M. Liggett.
\newblock {\em Stochastic interacting systems: contact, voter and exclusion
  processes}, volume 324 of {\em Grundlehren der mathematischen Wissenschaften
  [Fundamental Principles of Mathematical Sciences]}.
\newblock Springer-Verlag, Berlin, 1999.

\bibitem{Mou93}
T.~S. Mountford.
\newblock A metastable result for the finite multidimensional contact process.
\newblock {\em Canad. Math. Bull.}, 36(2):216--226, 1993.

\bibitem{MouValYao13}
Thomas Mountford, Daniel Valesin, and Qiang Yao.
\newblock Metastable densities for the contact process on power law random
  graphs.
\newblock {\em Electron. J. Probab.}, 18:No. 103, 36, 2013.

\bibitem{MV16}
Jean-Christophe Mourrat and Daniel Valesin.
\newblock Phase transition of the contact process on random regular graphs.
\newblock {\em Electron. J. Probab.}, 21:Paper No. 31, 17, 2016.

\bibitem{P92}
Robin Pemantle.
\newblock The contact process on trees.
\newblock {\em Ann. Probab.}, 20(4):2089--2116, 1992.

\bibitem{S96}
A.~M. Stacey.
\newblock The existence of an intermediate phase for the contact process on
  trees.
\newblock {\em Ann. Probab.}, 24(4):1711--1726, 1996.

\bibitem{Stacey01}
Alan Stacey.
\newblock The contact process on finite homogeneous trees.
\newblock {\em Probab. Theory Related Fields}, 121(4):551--576, 2001.

\end{thebibliography}

\end{document}